\definecolor{wineRed}{rgb}{0.7,0,0.3}
\theoremstyle{definition}
\newtheorem{theorem}{Theorem}[section]
\newtheorem{lemma}[theorem]{Lemma}
\newtheorem{proposition}[theorem]{Proposition}
\newtheorem{corollary}[theorem]{Corollary}
\numberwithin{equation}{section}
\newtheorem{remark}[theorem]{Remark}
\def\sgn{\hbox{sgn}\,}
\def\supp{\hbox{supp}\,}
\def\bN{\mathbb{N}}
\def\bR{\mathbb{R}}
\def\cE{{\mathscr{E}}}
\title{Special Solutions to the Space Fractional Diffusion Problem}
\author[{1}]{Tokinaga Namba {\footnote{namba.rb3.tokinaga@jp.nipponsteel.com}}}
\author[{2}]{Piotr Rybka{\footnote{rybka@mimuw.edu.pl}}}
\author[{3}]{Shoichi Sato{\footnote{shoichi@ms.u-tokyo.ac.jp}}}
\affil[{1}] {{\small Mathematical Science \& Technology Research Laboratory, Advanced Technology Research Laboratories, Research \& Development, Nippon Steel Corporation, 20-1 Shintomi, Futtsu, Chiba Prefecture 293-8511, JP}}
\affil[{2}]
{{\small Institute of Applied
Mathematics and Mechanics\\ University of Warsaw\\ ul. Banacha 2,
02-097 Warsaw, PL}}
\affil[{3}]
{{\small Graduate School of Mathematical Sciences, The University of Tokyo, 3-8-1 Komaba, Meguro-ku, Tokyo, 153-8914 JP}}
\date{\today}
\begin{document}

\maketitle
\begin{abstract}We derive a fundamental solution $\cE$ to a space-fractional diffusion problem on the half-line. The equation involves the Caputo derivative. We establish properties of $\cE$ as well as formulas for solutions to the Dirichlet and Neumann problems in terms of convolution of $\cE$ with data. We also study  integrability of derivative of solutions given in this way. We present conditions sufficient for uniqueness. Finally, we show the infinite speed of signal propagation.
\end{abstract}

\bigskip\noindent
{\bf Key words:} \quad Caputo derivative, space-fractional diffusion operator, fundamental solution, regularity of solutions, decay of solutions, speed of propagation

\bigskip\noindent
{\bf 2020 Mathematics Subject Classification.} Primary: 35R11, Secondary: 35A08
\section{Introduction}
In \cite{NaRy} and \cite{KR1} the authors studied the following equation
\begin{equation}\label{eq1}
 \frac{\partial u }{\partial t} - \frac{\partial  }{\partial x} D^\alpha_x u = 0 \qquad (x,t)\in\Omega\times(0,\infty),   
\end{equation}
when $\Omega=  (0,L)$ augmented with the boundary data and initial conditions, here $D^\alpha_x$ is the fractional Caputo derivative with respect to the spacial variable $x$.
The interest in such problems stems from the fact that eq. (\ref{eq1}) appears in the Green–Ampt infiltration  models of subsurface flows, see \cite{VV}. In fact, (\ref{eq1}) is a simplification of free boundary problem, which was studied in \cite{KR2}. 

In this note we derive in Theorem \ref{thm-main} a formula for $\cE$, a self-similar solution to (\ref{eq1})  considered on $(x,t)\in \bR_+$ (here we write $\bR_+$ for $(0,\infty)$). Studying  self-similar solutions or travelling fronts is important, when we  wish to gain insight into the structure of solution and in particular their long time behavior.

Function $\cE$, which we find, is sufficiently smooth to be a classical solution to (\ref{eq1}). We show that
$$
\cE(x,t) = \frac{a_0}{t^{\frac1{1+\alpha}}} E_{\alpha, 1+1/\alpha, 1/\alpha}
\left( - \frac{x^{1+\alpha}}{(1+\alpha)t}\right),
$$
where $E_{\alpha, 1+1/\alpha, 1/\alpha}$
is the 3-parameter generalized Mittag-Leffler function, 
see (\ref{df-ML}). We show that $\cE$ is positive and the integral of $\cE(\cdot, t)$ over $\bR_+$ does not depend on $t>0$. 
In fact, $\cE$ is a fundamental solution of (\ref{eq1}), namely solutions to the Dirichlet and Neumann problems for $\Omega = \bR_+$ 
may be expressed by means of convolution of the initial condition with $\cE$, this is the content of Theorem \ref{prop1}.

The problem of existence of a fundamental solution to various versions of time-fractional problems has already been  addressed in the literature. We name just a few papers dealing with this issues, see \cite{valdinoci}, \cite{mainardi}, \cite{Klekot}, \cite{kim}, \cite{pschu}. The tools used there are different from ours. However, it is not surprising that generalized Mittag-Leffler functions play a role. We could justify positivity of $\cE$ on the grounds of the theory of Mittag-Leffler functions, (actually we do this in the appendix).  However, 
we would like to stress that our proof of positivity of $\cE$ is entirely based on a PDE tool, which the maximum principle, we use this idea after \cite{roscani}.

Let us stress that the justification of formulas for solution based on the convolution requires establishing a number of properties of $\cE(\cdot, 1)$. In particular, we show that $\cE(\cdot, 1)$ is monotone. For this purpose we use a PDE tool, which is the maximum principle. We also show some sort of decay, namely $x\cE(x,1)$ is uniformly bounded, see Lemma \ref{le-zanik}. These properties of $\cE$ seem to be of independent interest.

Once we have a convolution formula for solutions to (\ref{eq1}), we may study properties of solutions to  \eqref{eq1} when $\Omega = \bR_+$. An urging question is about uniqueness of solutions given in this way. We show that if the initial conditions are sufficiently regular, i.e. they are absolutely continuous with compact support, then solutions enjoy sufficient regularity for employing the method of testing the equation with the solution itself, see Proposition \ref{pr-uq}. This technique immediately yields uniqueness and decay of solutions.

Eq. (\ref{eq1}) contains a parameter $\alpha$, so does $\cE$. Due to analyticity of $E_{\alpha, 1+1/\alpha, 1/\alpha}$ we deduce that if $u^\alpha$ is a solution to (\ref{eq1}) on $\bR_+$, then $u^\alpha \to u^{\alpha_0}$ in $L^1(\bR_+)$ when $\alpha$ goes to $\alpha_0\in (0,1]$.  We explicitly exclude $\alpha_0 =0$, which is due to the fact that $E_{\alpha, 1+1/\alpha, 1/\alpha}$
has no limit on $\bR_+$ when $\alpha\to 0$. However, this case is covered by 
\cite[Theorem 6.1]{NaRy}. 

With this observation we may 
address here the
issue of the speed of the signal propagation. This is a bit puzzling because for $\alpha =1$, eq. (\ref{eq1})  becomes the heat equation with the infinite speed of propagation, while for $\alpha =0$ problem (\ref{eq1}) is the transport  eq. for which the speed is finite.


Our numerical experiments presented in \cite{NRV} show that an initial pulse moves to the left with a finite speed. The same conclusions are drawn on the basis of numerical simulations by the authors of \cite{luczko} who dealt with the time-fractional diffusion-wave equation. However, our Proposition \ref{pi} stated for \eqref{eq1} with $\Omega=\bR_+$ and the Neumann boundary condition shows that actually the speed of the signal is infinite -- the support of the solution instantly becomes equal to $[0,\infty)$. This is shown
with the help of the explicit formulas employing the fundamental solution, $\cE$ constructed here. This fact does not contradict numerical observation. 

After presenting the content we describe the organization of the note. In Section 2 we recall the fundamentals of the fractional calculus. Section 3 is devoted to the  derivation of   a formula for a self-similar solution $\cE$. Here we also study its properties collected in Theorem \ref{thm-main} and we derive the formulas for the integral representation  of unique solutions. In  the Appendix we present the derivation of $\cE$ which based on the properties of the generalized  Mittag-Leffler function $E_{\beta k l}$ and not the  series manipulation. We also show a short proof of positivity of $\cE$, which follows from the theory of the  generalized  Mittag-Leffler function.

\section{Preliminaries} 

We will simultaneously recall the definitions of the Caputo and Riemann-Liouville fractional derivatives. For a function $f\in L^1(0,L)$ and $\alpha\in(0,1)$ we introduce the fractional integration operator by setting,
\begin{equation}\label{frak1}
    (I^\alpha f)(x) = \frac1{\Gamma(\alpha)}\int_0^x (x-z)^{\alpha -1}f(z)\,dz.
\end{equation}
For an absolutely continuous function $u\in AC[0,L]$  we define the Caputo fractional derivative of order $\alpha\in (0,1)$ by the following formula,
\begin{equation}\label{frak2}
  D^\alpha_C u (x)=  (I^{1-\alpha}u')(x) = 
  \frac1{\Gamma(1-\alpha)}\int_0^x \frac{u'(s)}{(x-s)^\alpha}\,ds,
\end{equation}
while the Riemann-Liouville fractional derivative has the form,
\begin{equation}\label{frak2}
  D^\alpha_{RL} u =  \frac{d}{dx} (I^{1-\alpha}u) .
\end{equation}
Later, for the sake of simplicity of notation we will write $D^\alpha$ for $D^\alpha_C $, occasionally with a lower index $x$ indicating the space variable.

We notice that if $u$ is absolutely continuous and $u(0)=0,$ then we have
\begin{equation}\label{rn6}
\frac d{d x}\left(I^{1-\alpha} u\right)= I^{1-\alpha} \frac d{dx}u \qquad\hbox{i.e.}\qquad D^\alpha_{C}u = D^\alpha_{{RL}}u.
\end{equation}
The following formula explains the relationship between the two types of derivatives for a  general  function $u\in AC[0,L]$,
\begin{equation}\label{RL-C}
D^\alpha_{x_{RL}}u = D^\alpha_{x_C}u + \frac{x^{-\alpha}}{\Gamma(1-\alpha)} u(0).
\end{equation}
The fractional integration is the inverse of the Caputo derivative up to a constant,
\begin{equation}\label{cal-roz}
    I^\alpha D^\alpha_{C}u(x) = u(x) - u(0).
\end{equation}

In our analysis we will need a more convenient representation of the operator  $(D^\alpha_x u)_x$. 
For this purpose we need to recall:
\begin{lemma}\label{altdef} (see \cite[Proposition 2.1]{NaRy})
Let $u:[0,l)\to\mathbb{R}$ be such that $u\in C^2(0,l)\cap C[0,l)$ and $u'\in L^1(0,l)$.
Then, $(D^\alpha_x u)_x$ exists everywhere in $(0,l)$ and
\begin{equation}\label{e:another}
\begin{split}
(D^\alpha u)_x(x)=
\frac{1}{\Gamma(1-\alpha)}&\left(\frac{\alpha(u(0)-u(x))+(\alpha+1)u'(x)x}{x^{\alpha+1}}\right.\\
&\quad\left.+\alpha(\alpha+1)\int_0^x[u(x-z)-u(x)+u'(x)z]\frac{dz}{z^{\alpha+2}}\right)
\end{split}
\end{equation}
for $x\in(0,l)$.
\end{lemma}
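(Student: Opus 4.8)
The plan is to produce a representation of $D^\alpha u(x)$ whose dependence on $x$ is completely explicit and termwise differentiable, to differentiate it, and then to re-express the result in the stated ``second difference'' form. Existence of $(D^\alpha u)_x$ then comes for free, since every term of the representation is manifestly a $C^1$ function of $x$ on $(0,l)$. First I would change variables $z=x-s$ in the Caputo derivative $D^\alpha u(x)=\frac{1}{\Gamma(1-\alpha)}\int_0^x u'(s)(x-s)^{-\alpha}\,ds$ to move the singular endpoint to $z=0$, obtaining $\Gamma(1-\alpha)\,D^\alpha u(x)=\int_0^x u'(x-z)\,z^{-\alpha}\,dz$. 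A direct application of the Leibniz rule is illegitimate here: the upper limit contributes $u'(0)z^{-\alpha}$ (and $u'$ need only be $L^1$ near $0$), while differentiating the integrand brings down $u''(x-z)$, which we cannot control near $z=x$.

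To bypass this I would regularize by subtracting the first-order Taylor polynomial of $u$ at $x$: writing $u'(x-z)=u'(x)+[u'(x-z)-u'(x)]$ splits the integral into the explicit piece $u'(x)\,x^{1-\alpha}/(1-\alpha)$ and a remainder. Integrating the remainder by parts against $z^{-\alpha}$, with $\phi(z):=u(x-z)-u(x)+u'(x)z$ as primitive (so that $\phi(0)=0$ and, by $u\in C^2$ near $x$, $\phi(z)=O(z^2)$), the boundary contribution at $z=0$ vanishes because $\phi(z)z^{-\alpha}=O(z^{2-\alpha})\to0$. This yields a representation of the form
\begin{equation*}
\Gamma(1-\alpha)\,D^\alpha u(x) = \frac{\alpha}{1-\alpha}\,u'(x)\,x^{1-\alpha} + \big(u(x)-u(0)\big)x^{-\alpha} - \alpha\int_0^x \phi(z)\,z^{-\alpha-1}\,dz,
\end{equation*}
in which each term is a differentiable function of $x$.

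Next I would differentiate this identity in $x$. The polynomial terms are handled directly; for the integral, differentiation under the integral sign is now legitimate because $\partial_x\phi(z)=u'(x-z)-u'(x)+u''(x)z$ is again $O(z^2)$ near $z=0$, so the new integrand is absolutely integrable, while near $z=x$ the factor $z^{-\alpha-1}$ stays bounded and the $u'\in L^1$ hypothesis controls $u'(x-z)$. A second integration by parts, this time against the second-order remainder $\chi(z):=-\phi(z)+\tfrac12 u''(x)z^2$ as primitive (which is $o(z^2)$, so that the $z=0$ endpoint term again vanishes since $\chi(z)z^{-\alpha-1}=o(z^{1-\alpha})$), converts the remaining $u'$-integral into the stated $\int_0^x\phi(z)\,z^{-\alpha-2}\,dz$ plus explicit boundary and polynomial pieces. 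Collecting everything, the several $u''(x)\,x^{1-\alpha}$ contributions cancel identically, and the $x^{-\alpha-1}$ boundary terms combine to cancel a spurious $\alpha u'(x)x^{-\alpha}$, leaving exactly
\begin{equation*}
\frac{\alpha(u(0)-u(x))+(\alpha+1)u'(x)x}{x^{\alpha+1}} + \alpha(\alpha+1)\int_0^x\big[u(x-z)-u(x)+u'(x)z\big]\frac{dz}{z^{\alpha+2}},
\end{equation*}
which is the claim.

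I expect the main obstacle to be the rigorous justification of the two interchanges of differentiation and integration against the singular kernels $z^{-\alpha}$, $z^{-\alpha-1}$, $z^{-\alpha-2}$, rather than the algebra. The clean way to do this is to work with difference quotients and pass to the limit by dominated convergence, and the whole scheme hinges on the complementary roles of the two hypotheses: $u\in C^2$ near the interior point $x$ provides the Taylor cancellations that tame the kernel singularity at $z=0$, while $u'\in L^1$ near the left endpoint is exactly what is needed to control the non-smooth behaviour of $u$ at $0$ after the change of variables.
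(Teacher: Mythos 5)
The paper itself offers no proof of this lemma: it is imported verbatim from \cite[Proposition 2.1]{NaRy}, so there is no in-paper argument to compare against. Your derivation is nonetheless the natural one and its skeleton is correct. Your intermediate identity is (after recombining the $u'(x)x^{1-\alpha}$ terms) exactly the Marchaud-type representation $\Gamma(1-\alpha)D^\alpha_C u(x)=(u(x)-u(0))x^{-\alpha}+\alpha\int_0^x(u(x)-u(x-z))z^{-1-\alpha}\,dz$; differentiating it and integrating by parts once more against the second-order Taylor remainder does reproduce \eqref{e:another} — I checked that the $u''(x)x^{1-\alpha}$ contributions cancel and that the boundary terms assemble into $\alpha(u(0)-u(x))x^{-\alpha-1}+(1+\alpha)u'(x)x^{-\alpha}$, and the resulting formula passes the sanity checks $u(x)=x$ and $u(x)=x^2$.

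Two technical points need repair. First, with $u\in C^2$ only, $\partial_x\phi(z)=u'(x-z)-u'(x)+u''(x)z$ is $o(z)$ near $z=0$, not $O(z^2)$ as you assert; this is still sufficient (since $o(z)\cdot z^{-\alpha-1}$ is integrable and $\chi(z)=o(z^2)$ kills the boundary term at $z=0$), but the stronger order is not available without more regularity. Second, and this is the one genuine gap, your justification of $\partial_x\int_0^x\phi(z;x)z^{-\alpha-1}\,dz$ by dominated convergence fails near $z=x$: there the difference quotient of $u(x-z)$ in $x$ is dominated only by the Hardy--Littlewood maximal function of $u'$ evaluated at $x-z$, which need not be integrable when $u'$ is merely $L^1$ near the origin. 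The standard substitute is continuity of translations in $L^1$: split the integral at $z=x/2$ (on $(0,x/2)$ your modulus-of-continuity estimate for $u''$ gives a genuine dominating function of the form $Cz\,\omega(z)z^{-\alpha-1}$), substitute $w=x-z$ on $(x/2,x)$, and bound $\int_0^{x/2}\bigl|\tfrac1h\int_0^h\bigl(u'(w+\tau)-u'(w)\bigr)\,d\tau\bigr|\,(x-w)^{-\alpha-1}\,dw$ by a constant times $\sup_{|\tau|\le|h|}\|u'(\cdot+\tau)-u'\|_{L^1(0,x/2)}\to0$. With that replacement (or an approximation of $u'$ by continuous functions) the argument is complete and yields existence of $(D^\alpha u)_x$ at every $x\in(0,l)$, as claimed.
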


With the help of this Lemma we will compute action of $(D^\alpha u)_x$ on scaled functions:
\begin{corollary}\label{co-ald}
Let $u:[0,l)\to\mathbb{R}$ be such that $u\in C^2(0,l)\cap C[0,l)$ and $u'\in L^1(0,l)$. If $\lambda>0$ and we set $v_\lambda(x) = u(\lambda^{\frac1{1+\alpha}} x)$, then $(D^\alpha_x v_\lambda)_x(x) = \lambda (D^\alpha_x u)_y(\lambda^{\frac1{1+\alpha}} x)$.
\end{corollary}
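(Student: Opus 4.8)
The plan is to apply the explicit representation \eqref{e:another} of Lemma \ref{altdef} directly to $v_\lambda$ and then reduce every term to the corresponding term for $u$ by a single change of variables. Throughout I would write $\mu = \lambda^{1/(1+\alpha)}$, so that the crucial identity $\mu^{1+\alpha} = \lambda$ holds; this is the sole source of the factor $\lambda$ in the claim.

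First I would check that the hypotheses of Lemma \ref{altdef} are inherited by $v_\lambda$, so that \eqref{e:another} is legitimately applicable to it. Since $u\in C^2(0,l)\cap C[0,l)$ and $u'\in L^1(0,l)$, the rescaled function $v_\lambda(x)=u(\mu x)$ belongs to $C^2(0,l/\mu)\cap C[0,l/\mu)$, and by the substitution $y=\mu x$ one has $v_\lambda'=\mu\,u'(\mu\,\cdot)\in L^1(0,l/\mu)$. Hence the formula holds for $v_\lambda$ on $(0,l/\mu)$, and I would record $v_\lambda(0)=u(0)$ and $v_\lambda'(x)=\mu u'(\mu x)$ for use in the computation.

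Next I would treat the two contributions in \eqref{e:another} separately, fixing $x$ and setting $y=\mu x$. In the algebraic (boundary) term, substituting $v_\lambda(0)=u(0)$, $v_\lambda(x)=u(y)$ and $v_\lambda'(x)x=u'(y)y$, and rewriting $x^{-(\alpha+1)}=\mu^{\alpha+1}y^{-(\alpha+1)}$, produces exactly $\mu^{\alpha+1}$ times the boundary term of $(D^\alpha u)_y(y)$. In the integral term I would change variables by $w=\mu z$, under which the bracket $[v_\lambda(x-z)-v_\lambda(x)+v_\lambda'(x)z]$ becomes $[u(y-w)-u(y)+u'(y)w]$ and $dz/z^{\alpha+2}=\mu^{\alpha+1}\,dw/w^{\alpha+2}$, while the upper limit $x$ maps to $y$; this again yields $\mu^{\alpha+1}$ times the integral term of $(D^\alpha u)_y(y)$. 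Summing the two scaled contributions, dividing by $\Gamma(1-\alpha)$, and using $\mu^{\alpha+1}=\lambda$ gives $(D^\alpha_x v_\lambda)_x(x)=\lambda\,(D^\alpha_x u)_y(\mu x)$, which is the assertion.

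The computation is essentially bookkeeping of homogeneity, so there is no serious obstacle. The only points requiring care are that the boundary term and the integral term scale with the \emph{same} power $\mu^{\alpha+1}$, so that they recombine into a single copy of \eqref{e:another} written for $u$, and that the change of variables $w=\mu z$ correctly transports the upper limit of integration from $x$ to $y=\mu x$. Once these are verified, the uniform factor $\lambda=\mu^{1+\alpha}$ emerges and the identity follows.
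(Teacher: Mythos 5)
Your proposal is correct and follows essentially the same route as the paper: apply the explicit representation of Lemma \ref{altdef} to $v_\lambda$, rescale the boundary term by writing $x^{-(\alpha+1)}=\mu^{\alpha+1}(\mu x)^{-(\alpha+1)}$, and change variables $\xi=\mu z$ in the singular integral so that both contributions pick up the same factor $\mu^{1+\alpha}=\lambda$. The only addition beyond the paper's argument is your explicit verification that $v_\lambda$ inherits the hypotheses of the lemma, which is a harmless (and welcome) bit of extra care.
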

\begin{proof}
We use Lemma \ref{altdef}  to calculate $(D_x^\alpha v_\lambda)_x$, 
\begin{align*}
&(D_x^\alpha v_\lambda)_x(x)\\
&=\frac{1}{\Gamma(1-\alpha)}\left(\frac{\alpha(v_\lambda(0)-v_\lambda(x))+(\alpha+1)(v_\lambda)_x(x)x}{x^{\alpha+1}} 
+\alpha(\alpha+1)\int_0^x[v_\lambda(x-z)-v_\lambda(x)+(v_\lambda)_x(x)z]\frac{dz}{z^{\alpha+2}} \right)\\
&=\frac{1}{\Gamma(1-\alpha)}
\left(\frac{\alpha(u(0) - u(\lambda^{\frac{1}{1+\alpha}}x))+(\alpha+1)\lambda^{\frac{1}{1+\alpha}}u_y(\lambda^{\frac{1}{1+\alpha}}x)x}{x^{\alpha+1}}\right.\\
&\quad \left.+\alpha(\alpha+1)\int_0^x[u(\lambda^{\frac{1}{1+\alpha}}(x-z))- u(\lambda^{\frac{1}{1+\alpha}}x)+\lambda^{\frac{1}{1+\alpha}}u_y(\lambda^{\frac{1}{1+\alpha}}x)z]\frac{dz}{z^{\alpha+2}}\right)\\
&=\frac{1}{\Gamma(1-\alpha)}\left(\frac{\alpha(u(0)-u(\lambda^{\frac{1}{1+\alpha}}x))+(\alpha+1)u_y(\lambda^{\frac{1}{1+\alpha}}x)(\lambda^{\frac{1}{1+\alpha}}x)}{\lambda^{-1}(\lambda^{\frac{1}{1+\alpha}}x)^{\alpha+1}}\right.\\
&\quad\left.+\alpha(\alpha+1)\int_0^x[u(\lambda^{\frac{1}{1+\alpha}}(x-z))-u(\lambda^{\frac{1}{1+\alpha}}x)+u_y(\lambda^{\frac{1}{1+\alpha}}x)(\lambda^{\frac{1}{1+\alpha}}z)]\frac{dz}{z^{\alpha+2}}\right).
\end{align*}
Changing the variable of integration by $\lambda^{\frac{1}{1+\alpha}}z=\xi$ we see $dz/z^{\alpha+2}=\lambda d\xi/\xi^{2+\alpha}$. Hence that the last term of the right-hand-side (RHS) is equal to
$$ 
\frac{\alpha(\alpha+1)\lambda}{\Gamma(1-\alpha)} \int_0^{\lambda^{\frac{1}{1+\alpha}}x}[u(\lambda^{\frac{1}{1+\alpha}}x-\xi)-u(\lambda^{\frac{1}{1+\alpha}}x)+u_y(\lambda^{\frac{1}{1+\alpha}}x)\xi]\frac{d\xi}{\xi^{\alpha+2}}.
$$
Therefore applying Lemma \ref{altdef} again yields
$$
(D_x^\alpha v_\lambda)_x(x)=\lambda^{1}(D_y^\alpha u)_y(\lambda^{\frac{1}{1+\alpha}}x).
$$ 
\end{proof}

In our construction of the self-similar solution we will use a three-parameter  generalized Mittag-Leffler function $E_{\beta,m,l}$.
It is defined by the following series for $z\in\mathbb{C}$, (see \cite[formula (1.9.19)]{kilbas})
\begin{equation}\label{df-ML}
    E_{\beta,m,l}(z) = \sum_{n=0}^\infty c_n^{\beta ml} z^n,
    \qquad \Re \beta>0, \ m>0, \ -\beta(jm + l)\not\in \bN\setminus\{0\},
\end{equation}
where
$$
c_n^{\beta ml} =\prod_{i=0}^{n-1} \frac{\Gamma(\beta(im +l)+1)}{\Gamma(\beta(im+l+1)+1)}.
$$
It is worth recalling that due to 
\cite[Theorem 1]{goro}, see also page 48 in \cite{kilbas}, we know that $E_{\beta,m,l}$ is an entire function of order $(\Re \beta)^{-1}$ and type $m^{-1}$, i.e.
$$
|E_{\beta,m,l}(z)|< \exp( (m^{-1}+ \epsilon) |z|^{1/\beta})
$$
holds for $|z| \ge r_0(\epsilon),$ where $r_0(\epsilon)$ is  sufficiently large.

In several places we will use the following observation concerning the Mittag-Leffler functions. Let us set,
\begin{equation}\label{def-phi} 
\Phi(x) = E_{\alpha, 1+ 1/\alpha, 1/\alpha} \left(- \frac{x^{1+\alpha}}{1+\alpha}\right).
\end{equation}
We recall:
\begin{proposition}\label{rnieE} (see \cite[Example 4.11]{kilbas},  \cite[Theorem 4]{saigo})
Function $\Phi$ defined in (\ref{def-phi}) satisfies the following fractional ordinary differential equation,
\begin{equation}\label{ode}
 D_{y_C}^\alpha v(y)+\frac{1}{1+\alpha}yv(y)=0, \quad y>0, \qquad v(0) = 1. 
\end{equation}
\end{proposition}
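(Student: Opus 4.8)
The plan is to prove the identity by a direct, term-by-term computation on the defining power series, reducing the whole claim to a single recurrence among the coefficients $c_n^{\beta m l}$. First I would write $\Phi$ out explicitly: with $\beta=\alpha$, $m=1+1/\alpha$ and $l=1/\alpha$ the argument $-\frac{x^{1+\alpha}}{1+\alpha}$ makes it natural to set $\gamma:=1+\alpha$ and expand
\[
\Phi(y)=\sum_{n=0}^\infty c_n\Bigl(-\tfrac1\gamma\Bigr)^n y^{\gamma n},\qquad c_n:=c_n^{\alpha,\,1+1/\alpha,\,1/\alpha}.
\]
The first bookkeeping step is to simplify the Gamma-function arguments in $c_n$. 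A short computation gives $\alpha(im+l)+1=\gamma i+2$ and $\alpha(im+l+1)+1=\gamma(i+1)+1$, so that
\[
c_n=\prod_{i=0}^{n-1}\frac{\Gamma(\gamma i+2)}{\Gamma(\gamma(i+1)+1)},\qquad c_0=1.
\]
In particular $\Phi(0)=c_0=1$, which already settles the initial condition $v(0)=1$.

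Next I would differentiate the series. Using the elementary formula $D^\alpha_C y^p=\frac{\Gamma(p+1)}{\Gamma(p+1-\alpha)}\,y^{p-\alpha}$, valid for $p>0$ (while $D^\alpha_C$ annihilates the constant $n=0$ term), applied with $p=\gamma n$, I obtain
\[
D^\alpha_C\Phi(y)=\sum_{n=1}^\infty c_n\Bigl(-\tfrac1\gamma\Bigr)^n\frac{\Gamma(\gamma n+1)}{\Gamma(\gamma n+1-\alpha)}\,y^{\gamma n-\alpha}.
\]
On the other hand $\frac1\gamma y\,\Phi(y)=\sum_{m=0}^\infty \frac1\gamma\,c_m\bigl(-\tfrac1\gamma\bigr)^m y^{\gamma m+1}$. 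The crucial observation is that the exponents match under the shift $n=m+1$: since $\gamma n-\alpha=\gamma(n-1)+1$, the $n$-th term of $D^\alpha_C\Phi$ carries the same power $y^{\gamma n-\alpha}$ as the $(n-1)$-th term of $\frac1\gamma y\,\Phi$. Collecting the coefficient of $y^{\gamma n-\alpha}$ and dividing out the common factor $(-1/\gamma)^{n-1}$, the vanishing of the sum reduces exactly to
\[
c_n\,\frac{\Gamma(\gamma n+1)}{\Gamma(\gamma n+1-\alpha)}=c_{n-1},\qquad n\ge1.
\]
This is precisely the ratio read off from the product formula above: the $i=n-1$ factor gives $c_n/c_{n-1}=\Gamma(\gamma(n-1)+2)/\Gamma(\gamma n+1)$, and $\gamma(n-1)+2=\gamma n+1-\alpha$. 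Hence every coefficient cancels (including the lowest, $y^1$, where the $n=1$ term of the first series meets the $m=0$ term of the second), and the equation holds for $y>0$.

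The only genuinely delicate point is the justification of the term-by-term application of $D^\alpha_C$, that is, the interchange of the fractional-integration operator $I^{1-\alpha}$ with the infinite sum. Here I would invoke the growth information recorded just after (\ref{df-ML}): since $E_{\alpha,1+1/\alpha,1/\alpha}$ is entire, $\Phi$ is the restriction to $[0,\infty)$ of an entire function of $y^\gamma$, so both the series and its termwise derivative converge uniformly on compact subsets of $[0,\infty)$. On each interval $[0,y]$ the partial sums of $\Phi'$ are then uniformly bounded, and since the kernel $(y-s)^{-\alpha}$ is integrable for $\alpha\in(0,1)$, dominated convergence lets the convolution integral defining $D^\alpha_C$ pass under the summation. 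With this interchange legitimate, the formal cancellation above becomes a rigorous identity. I expect this convergence-and-interchange argument, rather than the algebra of the coefficients, to be the main obstacle.
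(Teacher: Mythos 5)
Your computation is correct, but note that the paper does not actually prove Proposition \ref{rnieE}: it quotes the result from Kilbas--Srivastava--Trujillo (Example 4.11) and Kilbas--Saigo (Theorem 4). What you have written is therefore a self-contained verification of a cited fact, and it is essentially the same series manipulation the paper runs in the opposite direction in Lemma \ref{le-d}, where the recurrence $b_n/b_{n-1}=\Gamma(\gamma(n-1)+2)/\Gamma(\gamma n+1)$ is \emph{derived} from the requirement that $\cE$ solve \eqref{rn1}, using the same identity $D^\alpha_C y^{p}=\frac{\Gamma(p+1)}{\Gamma(p+1-\alpha)}y^{p-\alpha}$. Your index bookkeeping checks out: $\alpha(im+l)+1=\gamma i+2$, $\alpha(im+l+1)+1=\gamma(i+1)+1$, the exponent match $\gamma n-\alpha=\gamma(n-1)+1$, and the cancellation $\gamma(n-1)+2=\gamma n+1-\alpha$ are all right, and the lowest-order term cancels as you say. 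Your treatment of the interchange of $I^{1-\alpha}$ with the sum is also adequate: since the termwise derivative series has lowest exponent $\gamma-1=\alpha>0$, it converges uniformly on compact subsets of $[0,\infty)$ by standard power-series theory, and the integrable kernel $(y-s)^{-\alpha}$ then justifies term-by-term Caputo differentiation. The only thing your argument buys beyond the paper is independence from the cited references; conversely, the Appendix of the paper shows how the same ODE can be reached without any series work, by reducing the self-similar ansatz to $(*)$ and invoking the quoted solution formula.
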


\section{The fundamental solution and its properties}
We will derive a 
self-similar solution, $\cE,$ to the following equations,
\begin{equation}\label{rn1}
\frac{\partial u }{\partial t} - \frac{\partial  }{\partial x} D^\alpha u = 0 \qquad (x,t)\in (0,\infty)^2.
\end{equation}
This is done in the theorem below,
where we also present the basic  properties of $\cE$.
\begin{theorem}\label{thm-main}
Function $\cE: (0,\infty)^2 \to \bR$ given by formula  
\begin{equation}\label{defE}
    \cE(x,t) = a_0 t^{-\frac{1}{1+\alpha}} 
    E_{\alpha, 1+1/\alpha, 1/\alpha}\left( -\frac{x^{1+\alpha}}{ (1+\alpha)t}\right).
\end{equation}
where 
\begin{equation}\label{df-a0}
\frac{1}{a_0} = 2\int_0^\infty E_{\alpha, 1+1/\alpha, 1/\alpha}( -x^{1+\alpha}/ (1+\alpha))\,dx
\end{equation}
is well-defined:\\  
(1) $\cE\in C^2((0,\infty)^2)$ and $\cE$   is a self-similar solution to (\ref{rn1}), i.e. it is invariant under the transformation $(x,t) \mapsto (\lambda^{\frac1{1+\alpha}} x, \lambda t)$ for $\lambda>0$.\\
(2)  $\cE$ is positive for all $x,t >0$.\\
(3) For all  $t>0$ function $\cE(\cdot,t)$ is decreasing.\\
(4) For all  $t>0$ function $\cE(\cdot,t)$ is in $L^1(0, \infty)$ and
the integral $\int_0^\infty \cE(x,t)\,dx=\frac 12$.
\end{theorem}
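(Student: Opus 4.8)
The plan is to reduce every assertion to a statement about the one–variable profile $\Phi$ of (\ref{def-phi}). Writing $\beta=\tfrac{1}{1+\alpha}$ and $\eta=xt^{-\beta}$, formula (\ref{defE}) reads $\cE(x,t)=a_0\,t^{-\beta}\Phi(\eta)$, so that $\cE$ inherits $C^\infty$–regularity on $(0,\infty)^2$ from the fact that $E_{\alpha,1+1/\alpha,1/\alpha}$ is entire and that $(x,t)\mapsto -x^{1+\alpha}/((1+\alpha)t)$ is smooth for $x,t>0$; in particular $\cE\in C^2$. For the equation I would first apply Corollary~\ref{co-ald} with $\lambda=1/t$ to the profile, giving $(D^\alpha_x\cE)_x(x,t)=a_0\,t^{-\beta-1}(D^\alpha\Phi)_y(\eta)$, and then use the fractional ODE (\ref{ode}) in the form $D^\alpha\Phi(y)=-\tfrac{1}{1+\alpha}y\Phi(y)$, differentiated once, to replace $(D^\alpha\Phi)_y(\eta)=-\tfrac{1}{1+\alpha}\big(\Phi(\eta)+\eta\Phi'(\eta)\big)$. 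A direct computation of $\partial_t\cE$ produces the same expression, whence $\partial_t\cE=\partial_xD^\alpha\cE$. Self–similarity is then the elementary scaling identity $\cE(\lambda^{\beta}x,\lambda t)=\lambda^{-\beta}\cE(x,t)$, immediate from $\cE=a_0t^{-\beta}\Phi(\eta)$; none of this is the hard part.

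For positivity (2) it suffices to prove $\Phi>0$ on $[0,\infty)$, since $a_0>0$ once the integral in (\ref{df-a0}) is known to be finite and positive (see the last paragraph), and $\eta$ sweeps $(0,\infty)$. I would argue by a fractional minimum principle. Assume $\Phi$ vanishes and let $y_0$ be its first zero; then $\Phi>0$ on $[0,y_0)$ and $\Phi$ attains its minimum over $[0,y_0]$ at $y_0$. Integrating by parts in $D^\alpha\Phi(y_0)=\tfrac{1}{\Gamma(1-\alpha)}\int_0^{y_0}\Phi'(s)(y_0-s)^{-\alpha}\,ds$ (the endpoint term at $y_0$ vanishes because $\Phi(y_0)=0$ and $\Phi\in C^1$), and using $\Phi(0)=1$ together with $\Phi\ge0$ on $[0,y_0]$, yields
\[
D^\alpha\Phi(y_0)\le -\frac{y_0^{-\alpha}}{\Gamma(1-\alpha)}<0 .
\]
This contradicts the ODE value $D^\alpha\Phi(y_0)=-\tfrac{1}{1+\alpha}y_0\Phi(y_0)=0$, so $\Phi>0$ everywhere.

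Monotonicity (3) is equivalent to $\Phi'\le0$ on $(0,\infty)$. The idea is to derive an equation for $\psi:=\Phi'$ and repeat the maximum–principle argument. Since $\Phi'(0)=0$, relation (\ref{RL-C}) gives $\tfrac{d}{dy}D^\alpha\Phi=D^\alpha_{RL}\Phi'=D^\alpha\Phi'$, so differentiating (\ref{ode}) produces
\[
D^\alpha\psi(y)+\frac{y}{1+\alpha}\psi(y)=-\frac{1}{1+\alpha}\Phi(y),\qquad \psi(0)=0 .
\]
By part (2) the right–hand side is strictly negative. If $\psi$ had a positive maximum at some $\tau>0$, the same integration by parts (now at a maximum) would give $D^\alpha\psi(\tau)\ge \tfrac{\tau^{-\alpha}}{\Gamma(1-\alpha)}\psi(\tau)>0$, whereas the displayed equation forces $D^\alpha\psi(\tau)<0$; contradiction. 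Hence $\psi\le0$ and $\cE(\cdot,t)$ is decreasing.

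Finally, for (4) the substitution $\eta=xt^{-\beta}$ turns $\int_0^\infty\cE(x,t)\,dx$ into $a_0\int_0^\infty\Phi(\eta)\,d\eta$, manifestly independent of $t$, and by the very definition (\ref{df-a0}) of $a_0$ this equals $\tfrac12$; equivalently the conservation law $\tfrac{d}{dt}\int_0^\infty\cE\,dx=\big[D^\alpha\cE\big]_{x=0}^{x=\infty}=0$ follows from the flux form of (\ref{rn1}) once the boundary fluxes are shown to vanish. The genuine content, and what I expect to be the main obstacle, is the \emph{integrability} of $\Phi$: one must show that the Kilbas–Saigo function decays along the negative real axis fast enough that $\int_0^\infty\Phi<\infty$, so that $a_0$ is well defined and positive. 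This cannot be read off from the entire–function bound recalled after (\ref{df-ML}), which only controls growth; instead it requires the asymptotic expansion of $\Phi$ at $+\infty$ (cf. the decay estimate of Lemma~\ref{le-zanik}). The same decay is what guarantees that the extrema invoked in parts (2) and (3) are actually attained, so I would treat this asymptotic analysis as the technical heart of the proof and carry it out first.
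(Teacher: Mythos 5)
Parts (1)--(3) of your plan are essentially sound. Your positivity argument (first zero $y_0$ of $\Phi$, integrate by parts in $D^\alpha_C\Phi(y_0)$ so that $D^\alpha_C\Phi(y_0)\le -y_0^{-\alpha}/\Gamma(1-\alpha)<0$, against the value $0$ forced by (\ref{ode})) is precisely the paper's Appendix Lemma A; the main text of the paper instead proves Lemma \ref{le-posit} by a weak maximum principle in a non-cylindrical parabolic domain with an $\epsilon$-perturbation, so you have picked the shorter of the two routes the authors themselves record. Your monotonicity argument is genuinely different from the paper's: you differentiate (\ref{ode}), using $\Phi'(0)=0$ and (\ref{RL-C}) to justify $\frac{d}{dy}D^\alpha_C\Phi=D^\alpha_C\Phi'$, obtain $D^\alpha_C\psi+\frac{y}{1+\alpha}\psi=-\frac{1}{1+\alpha}\Phi<0$ for $\psi=\Phi'$, and exclude a positive maximum; the paper's Lemma \ref{le-dec} again goes through the PDE maximum principle plus a linear perturbation. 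Your route is cleaner, and note that no decay at infinity is needed to make it work: if $\psi(y_1)>0$ for some $y_1$, take $\tau$ maximizing $\psi$ over the compact interval $[0,y_1]$; then $\tau$ maximizes $\psi$ over $[0,\tau]$ and the extremum inequality $D^\alpha_C\psi(\tau)\ge \psi(\tau)\tau^{-\alpha}/\Gamma(1-\alpha)>0$ already contradicts the sign of the right-hand side. The same remark applies to (2), where the first zero exists by continuity alone. So the ``attainment of extrema'' issue you defer to the asymptotic analysis is not actually an obstacle.

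The genuine gap is in part (4). You correctly identify the integrability of $\Phi$ over $(0,\infty)$ as the crux, but you do not prove it; you only announce that the asymptotic expansion of the Kilbas--Saigo function on the negative real axis would be needed and that you would ``carry it out first.'' Two problems with this. First, the decay you point to as a model, $x\,\Phi(x)\le 2$ from Lemma \ref{le-zanik}, gives only $\Phi(x)\le 2/x$, which is \emph{not} integrable at infinity, so that estimate cannot close the argument. Second, the precise asymptotics of $E_{\alpha,1+1/\alpha,1/\alpha}$ is exactly the nontrivial input the paper deliberately avoids: Lemma \ref{le-int} proceeds elementarily by first showing $0<\Phi\le\Phi(0)=1$ (apply $I^\alpha$ to $D^\alpha_C\Phi=-\frac{x}{1+\alpha}\Phi<0$ and use (\ref{cal-roz})), then rewriting (\ref{ode}) in Riemann--Liouville form via (\ref{RL-C}), integrating over $[1,R]$, and integrating by parts in the term containing $\frac{d}{dx}\int_0^x\Phi(t)(x-t)^{\alpha-1}\,dt$; the uniform bound $0<\Phi\le1$ then bounds $\int_1^R\Phi$ by a constant independent of $R$. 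Without this step (or an actual proof of the decay $\Phi(x)=O(x^{-1-\alpha})$), the constant $a_0$ in (\ref{df-a0}) is not known to be well defined and positive, and consequently neither the normalization $\int_0^\infty\cE(x,t)\,dx=\frac12$ nor even the statement that $\cE$ is well defined is justified. You should supply this argument rather than postpone it.
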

\begin{remark}
Using $\Phi$ defined in (\ref{def-phi}) we can write $\cE$ shortly as follows,
$$
\cE(x,t) = a_0 t^{-\frac1{1+\alpha}} \Phi\left(\frac x{t^{\frac1{1+\alpha}}}\right).
$$
\end{remark}
Our proof of Theorem \ref{thm-main} will be conducted in a number of steps. Here is the first one:
\begin{lemma}\label{le-sss}
Let us suppose that $u\in C^2((0,\infty)^2 )$. Then $u$ is a solution to (\ref{rn1}) if and only if $u_\lambda$ is a solution, where
$$
u_\lambda(x,t) = u(\lambda^{\frac1{1+\alpha}}x, \lambda t) .
$$
\end{lemma}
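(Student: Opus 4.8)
The plan is to reduce everything to the scaling identity for the spatial operator already recorded in Corollary~\ref{co-ald}, combined with a one-line chain rule for the time derivative. Throughout, write $(y,s) = (\lambda^{\frac1{1+\alpha}} x, \lambda t)$ for the image of $(x,t)$ under the similarity transformation.

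First I would handle the time derivative. Since $u\in C^2((0,\infty)^2)$, the chain rule gives directly
$$
\frac{\partial u_\lambda}{\partial t}(x,t) = \lambda\,\frac{\partial u}{\partial t}(y,s).
$$
Second, I would treat the spatial term at a fixed time $t$. Freezing the second argument at $s = \lambda t$ and setting $w(\cdot) := u(\cdot, s)$, the function $x\mapsto u_\lambda(x,t)$ is exactly the scaled profile $w(\lambda^{\frac1{1+\alpha}} x)$ to which Corollary~\ref{co-ald} applies. That corollary then yields
$$
(D^\alpha_x u_\lambda)_x(x,t) = \lambda\,(D^\alpha_x u)_y(y,s).
$$
Subtracting the two displays, the equation residual of $u_\lambda$ at $(x,t)$ equals $\lambda$ times the residual of $u$ at $(y,s)$; since $\lambda>0$, one vanishes precisely when the other does.

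Third, I would conclude the equivalence. Because $(x,t)\mapsto (y,s)$ is a bijection of $(0,\infty)^2$ onto itself, the residual of $u$ vanishes at every point if and only if the residual of $u_\lambda$ vanishes at every point, which is the claimed equivalence; the converse direction alternatively follows from the identity $(u_\lambda)_{1/\lambda} = u$, so that one implication already contains the other.

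The only point requiring care—and the step I expect to be the main obstacle—is verifying that the hypotheses of Corollary~\ref{co-ald} (inherited from Lemma~\ref{altdef}) genuinely hold for $w = u(\cdot, s)$ on each interval $[0,l)$, in particular continuity up to $x=0$ and integrability of $w'$ near the origin. Under the bare assumption $u\in C^2((0,\infty)^2)$ these must be read together with the implicit boundary regularity that makes $(D^\alpha u)_x$ meaningful in the first place; once that is granted, the rest of the argument is purely mechanical.
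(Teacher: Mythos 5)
Your proposal is correct and follows essentially the same route as the paper: the chain rule for the time derivative, Corollary~\ref{co-ald} for the spatial term at frozen time, and the observation that the similarity map is a bijection of $(0,\infty)^2$ onto itself. Your extra remarks (the factor-$\lambda$ residual identity, the inverse scaling $(u_\lambda)_{1/\lambda}=u$, and the caveat about the hypotheses of Lemma~\ref{altdef} near $x=0$) are sensible refinements of what the paper does implicitly.
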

\begin{proof}
Now, it is straightforward to see that
$$
(u_\lambda)_t(x,t)=\lambda^{1}u_s(\lambda^{\frac{1}{1+\alpha}}x,\lambda t).
$$
We use Corollary \ref{co-ald} to compute $(D^\alpha u_\lambda)_x$. We apply this result to $v_\lambda$ defined as $v_\lambda(x) = u(\lambda^{\frac1{1+\alpha}}, \lambda t)$. This leads us to the identity,
$$
(D^\alpha u_\lambda)_x(x,t) = (D^\alpha v_\lambda)_x (x)= \lambda^1(D^\alpha u)_y(\lambda^{\frac1{1+\alpha}}, \lambda t).
$$
Thus, we see that
$u$ satisfies \eqref{rn1} if and only if 
$u_\lambda$ fulfills 
$$
(u_\lambda)_t(x,t)-(D_x^\alpha u_\lambda)_x(x,t)=0
$$
for all $x>0,\ t>0$.
\end{proof}
The above Lemma tells us that self-similar solutions depend only on $\xi = x t^{-\frac1{1+\alpha}}$. However, if we want to obtain a solution, whose average over $\bR_+$ is independent of time, then we must consider $\cE$ of the form
\begin{equation}\label{rnE}
\cE(x,t) = a_0 t^{\gamma} v( x^{1+\alpha}/ t),    
\end{equation}
where $v$ is integrable. It is easy to see that for 
$$
\gamma = - \frac 1{1+\alpha}
$$
and $t_2>t_1>0$ we have
$$
\int_0^\infty \cE(x,t_2)\,dx = \int_0^\infty \cE(x,t_2)\,dx.
$$
We are now ready to derive the form of $\cE$.
\begin{lemma}\label{le-d}
Let us assume that $v$ appearing in \eqref{rnE} is analytic and $v(0) =1$. Then, 
$$
v(z) = E_{\alpha, 1+1/\alpha, 1/\alpha}
\left(- \frac{z}{1+\alpha}\right),
$$
where $E_{\beta,m,l}$ is the generalized Mittag-Leffler function defined in (\ref{df-ML}).
\end{lemma}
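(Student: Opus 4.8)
The plan is to substitute the self-similar ansatz \eqref{rnE} into the PDE \eqref{rn1} and reduce it to a fractional ODE that is already solved by Proposition \ref{rnieE}. First I would fix $t>0$ and view $\cE(\cdot,t)$ as a function of $x$; I would like to rewrite the equation for $\cE$ as a condition on $v$ alone, exploiting the self-similar structure. By Lemma \ref{le-sss} any solution of the form \eqref{rnE} is determined by its restriction to a single time slice, so it suffices to work at $t=1$ and study $w(x) := v(x^{1+\alpha})$. The key computational step is to express both $\cE_t$ and $(D_x^\alpha \cE)_x$ in terms of $v$ and its derivatives evaluated at $z = x^{1+\alpha}/t$.

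The main bridge between the two-variable PDE and a one-variable fractional ODE is the observation, recorded just before the statement, that the time-profile exponent $\gamma = -1/(1+\alpha)$ makes the spatial integral time-invariant; I expect this same exponent to make the PDE collapse. Concretely, I would compute $\partial_t \cE = a_0\bigl(\gamma t^{\gamma-1} v(z) - t^\gamma z t^{-1} v'(z)\bigr)$ with $z = x^{1+\alpha}/t$, and compare it with $(D_x^\alpha \cE)_x$. For the fractional term the natural move is to change variables so that the spatial operator acts on the self-similar profile: setting $\Phi(x) = v(x^{1+\alpha}/(1+\alpha))$ as in \eqref{def-phi}, Proposition \ref{rnieE} tells us that the particular profile $E_{\alpha,1+1/\alpha,1/\alpha}(-x^{1+\alpha}/(1+\alpha))$ solves the fractional ODE \eqref{ode}. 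So the strategy is to show that the requirement that \eqref{rnE} solve \eqref{rn1} forces $v$ to satisfy exactly the fractional ODE whose unique analytic normalized solution is the claimed Mittag-Leffler function.

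I anticipate two obstacles. The first is purely bookkeeping: correctly propagating the chain-rule factors $\lambda^{1/(1+\alpha)}$ through the fractional derivative, which is nonlocal and does not commute naively with scaling; here Corollary \ref{co-ald} is exactly the tool that makes $(D_x^\alpha \cdot)_x$ scale by $\lambda^1$, matching the factor produced by $\partial_t$, and this is what pins down $\gamma$. The second and more delicate obstacle is the uniqueness claim implicit in the statement: I must argue that \eqref{ode} together with the normalization $v(0)=1$ and analyticity singles out one function. The plan is to expand $v(z) = \sum_{n\ge 0} b_n z^n$, insert it into the fractional ODE, and use the termwise action of $D^\alpha$ on powers (via \eqref{frak2} applied to $x^{\beta}$, giving $\Gamma(\beta+1)/\Gamma(\beta+1-\alpha)\, x^{\beta-\alpha}$) to obtain a recursion for the coefficients $b_n$. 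Matching this recursion against the explicit product formula for $c_n^{\beta m l}$ in \eqref{df-ML} with $\beta=\alpha$, $m=1+1/\alpha$, $l=1/\alpha$ should show $b_n = c_n^{\beta m l} (-(1+\alpha))^{-n}$, which is precisely the asserted form; the nonresonance condition $-\beta(jm+l)\notin\bN\setminus\{0\}$ guarantees the denominators in the recursion never vanish, so the analytic solution is unique.
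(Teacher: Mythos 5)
Your proposal is correct in substance, but it follows a genuinely different route from the paper's own proof of Lemma \ref{le-d}. The proof in the main text never passes through the fractional ODE: it writes $v(z)=\sum_{n\ge0}c_nz^n$, inserts the ansatz \eqref{rnE} directly into the two-variable PDE \eqref{rn1}, applies $D^\alpha x^\beta=\frac{\Gamma(\beta+1)}{\Gamma(\beta+1-\alpha)}x^{\beta-\alpha}$ termwise to the powers $x^{(1+\alpha)n}$, and matches coefficients of $x^{(1+\alpha)n}t^{\gamma-n-1}$ on both sides to obtain $c_n=(-1)^n(1+\alpha)^{-n}b_n$ with $b_n$ as in \eqref{defb}, which is then recognized as the coefficient $c_n^{\beta m l}$ of $E_{\alpha,1+1/\alpha,1/\alpha}$. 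Your route --- first reduce to the one-variable fractional ODE \eqref{ode} using Corollary \ref{co-ald} and the explicit form of $\partial_t\cE$, then solve that ODE by a power-series recursion --- is essentially the derivation the paper relegates to its Appendix, hybridized with the main text's coefficient identification. What your version buys is a cleaner separation of the self-similar reduction from the special-function identification, and, unlike either version in the paper, an explicit uniqueness statement: the recursion together with $v(0)=1$ determines every coefficient, so the analytic normalized solution is unique. Two small repairs are needed. First, the reduction actually yields $\frac{d}{dy}\bigl(D^\alpha_y\Phi(y)+\frac{1}{1+\alpha}y\Phi(y)\bigr)=0$, so you must dispose of the constant of integration before arriving at \eqref{ode}; it vanishes because both $y\Phi(y)$ and the Caputo derivative of a smooth function tend to $0$ as $y\to0^+$ (the Appendix cites \cite[Proposition 3.1]{NRV} for exactly this step). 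Second, the relation between the two profiles is $\Phi(x)=v(x^{1+\alpha})$, not $v(x^{1+\alpha}/(1+\alpha))$: the factor $\frac1{1+\alpha}$ sits inside the argument of the Mittag-Leffler function, not inside $v$. Finally, the uniqueness of the coefficient recursion follows simply because the Gamma ratios $\Gamma(\alpha i+i+2)/\Gamma(\alpha(i+1)+i+2)$ are finite and nonzero for $\alpha\in(0,1)$; the nonresonance condition in \eqref{df-ML} concerns the well-posedness of $E_{\beta,m,l}$ itself rather than the recursion, so it is not the right thing to invoke there.
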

\begin{proof}
Let us suppose that
$$
v(z) = \sum_{n=0}^\infty c_n z^n.
$$
Then, inserting $\cE$ defined by (\ref{rnE}) into (\ref{rn1}) yields
$$
\sum_{n=0}^\infty c_n (\gamma -n) \frac{x^{(1+\alpha)n}}{t^{n-\gamma +1}} = 
\sum_{n=1}^\infty c_n 
\frac{\Gamma((1+\alpha)n +1)[(1+\alpha)(n-1) +1]}{\Gamma((1+\alpha)(n-1)+2)}
\frac{x^{(1+\alpha)(n-1)}}{t^{n-\gamma}} ,
$$
where we took into account that $D^\alpha x^\beta = \frac{\Gamma(\beta+1)}{\Gamma(\beta+1-\alpha)} x^{\beta-\alpha}.$ Hence, we find the formula for $c_n$,
$$
c_{n} = \frac{(-1)^{n}}{(1+\alpha)^{n}} b_n,
$$
where we set $c_0=1$ and
\begin{equation}\label{defb}
    b_n = \prod_{i=0}^{n-1} \frac{\Gamma(\alpha i + i +2)}{\Gamma(\alpha(i+1) + i +2)}.
\end{equation}
Hence, 
$$
v(z) = \sum_{n=0}^\infty (-1)^n b_n \left( \frac z{1+\alpha}\right)^n.
$$
If we take into account the form of $b_n$'s, then we realize that
$$
v(z) = E_{\alpha, 1+1/\alpha, 1/\alpha}\left(- \frac z{1+\alpha}\right),
$$
where $E_{\beta,m,l}$ is a generalized Mittag-Leffler function defined in (\ref{df-ML}).
Thus, we reache  $\cE$  of the  form (\ref{defE}), where the multiplicative constant $a_0$ has to be determined by other means.
\end{proof}
\begin{remark}
The argument above is based on the series manipulation. In the Appendix, we present the proof of this lemma, which is based only on the theory of the generalized Mittag-Leffler function $E_{\beta, m, l}$.
\end{remark}

An important step in our analysis is checking that  $\cE$ is indeed positive. The argument we use is based on the use of the maximum principle for (\ref{eq1}). This  idea was used first in the proof of a similar result in \cite{roscani}. We provide our own and extended version of the argument. In the Appendix we present a short proof of the same result, which is of independent interest. It is based on the theory of the three-parameter Mittag-Leffler function.
\begin{lemma}\label{le-posit}
Function 
$\Phi$  defined in (\ref{def-phi}) is positive for all $x>0$.
\end{lemma}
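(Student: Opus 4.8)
The plan is to argue by contradiction using the fractional maximum (extremum) principle for the Caputo derivative, exploiting the fact that by Proposition \ref{rnieE} the function $\Phi$ solves the fractional ODE \eqref{ode}, i.e. $D^\alpha_C\Phi(y) = -\frac{1}{1+\alpha}\,y\,\Phi(y)$ with $\Phi(0)=1$. The whole point is that the equation forces $D^\alpha_C\Phi$ to vanish at any zero of $\Phi$, whereas the extremum principle forbids this at a \emph{first} zero.

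First I would record the pointwise representation of the Caputo derivative at an interior point, obtained by integrating \eqref{frak2} by parts against the increment $\Phi(\cdot)-\Phi(x_0)$. For $\Phi\in AC[0,x_0]$ this gives
$$
D^\alpha_C\Phi(x_0) = \frac{1}{\Gamma(1-\alpha)}\left[\frac{\Phi(x_0)-\Phi(0)}{x_0^{\alpha}} + \alpha\int_0^{x_0}\frac{\Phi(x_0)-\Phi(s)}{(x_0-s)^{\alpha+1}}\,ds\right].
$$
I would verify that the boundary contribution at the singular endpoint $s=x_0$ vanishes because $1-\alpha>0$, and that $\Phi$ is regular enough for this manipulation: from the series \eqref{df-ML} one has $\Phi(x)=1-\frac{b_1}{1+\alpha}x^{1+\alpha}+\dots$ near $0$, so $\Phi'(x)$ behaves like $x^{\alpha}$ and is integrable, whence $\Phi\in AC[0,x_0]$ and $\Phi\in C^2$ on $(0,\infty)$, so the formula is legitimate and the Caputo derivative exists classically.

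Next, assuming for contradiction that $\Phi$ is not positive on all of $(0,\infty)$, I would set $x_0=\inf\{x>0:\Phi(x)\le 0\}$. Since $\Phi(0)=1$ and $\Phi$ is continuous, this infimum is attained over a nonempty set, $x_0>0$, and by minimality $\Phi>0$ on $[0,x_0)$ while $\Phi(x_0)=0$. Inserting this into the displayed formula, the first term equals $(0-1)x_0^{-\alpha}=-x_0^{-\alpha}<0$, while in the integral $\Phi(x_0)-\Phi(s)=-\Phi(s)\le 0$, so the entire right-hand side is strictly negative: $D^\alpha_C\Phi(x_0)<0$. On the other hand, evaluating \eqref{ode} at $x_0$ gives $D^\alpha_C\Phi(x_0)=-\frac{x_0}{1+\alpha}\Phi(x_0)=0$. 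This contradiction shows that no such $x_0$ exists, so $\Phi>0$ on $(0,\infty)$.

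The individual steps are short; the only delicate point — and hence the main, if modest, obstacle — is the justification of the extremum formula: fixing the sign convention in the integration by parts, confirming that the boundary term at $s=x_0$ really vanishes, and checking the regularity of $\Phi$ up to the origin so that the Caputo derivative admits this representation. I note that only the boundary term $\Phi(x_0)-\Phi(0)=-1$ is needed to obtain the strict inequality, so the argument does not rely on the sign of the integral at all, which makes the conclusion robust.
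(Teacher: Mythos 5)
Your proof is correct, but it is not the route the paper takes for Lemma \ref{le-posit} itself. The paper's main-text proof is a PDE argument: it forms the primitive $u(x,t)=\int_0^{xt^{-1/(1+\alpha)}}\Phi(z)\,dz$, shows via Proposition \ref{rnieE} and (\ref{RL-C}) that $u$ satisfies the strict differential inequality (\ref{nier}), and then runs a weak maximum principle in the non-cylindrical region (\ref{def-om}), adding the barrier $v_\epsilon=\epsilon c_\alpha x^{1+\alpha}+\epsilon(t-1)$ to substitute for the missing strong maximum principle. Your argument --- take the first zero $x_0$ of $\Phi$, use the integrated-by-parts pointwise representation of $D^\alpha_C$ to get $D^\alpha_C\Phi(x_0)<0$, and contradict the ODE (\ref{ode}) which forces $D^\alpha_C\Phi(x_0)=0$ --- is precisely the paper's own alternative proof, given as Lemma A in the Appendix. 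The trade-off is as the authors describe: your/the appendix's route is shorter, purely one-dimensional, and needs only the extremum representation of the Caputo derivative plus continuity of $\Phi$; the main-text route is deliberately chosen to exhibit a parabolic maximum-principle technique (after \cite{roscani}) that the paper then reuses, via Lemma \ref{le-pom}, to prove monotonicity of $\cE(\cdot,t)$ in Lemma \ref{le-dec}, which your argument does not yield. One small caveat on your closing remark: the strict inequality does come from the boundary term alone, but the conclusion still relies on the integral term being nonpositive, i.e.\ on $\Phi\ge 0$ up to $x_0$; this is exactly why $x_0$ must be the \emph{first} zero, so the claim that the sign of the integral is irrelevant overstates the robustness, even though your proof as written handles it correctly.
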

\begin{proof}
Let us set
$$
u(x,t) 
:= \int_0^{xt^{-\frac1{1+\alpha}}} \Phi(z) \, dx.
$$
We will see that 
\begin{equation}\label{nier}
    \left(\frac\partial{\partial t} - \frac{\partial }{\partial x}D^\alpha_{x_C} \right) u<0.
\end{equation}
Indeed, 
$\frac{\partial u}{\partial t}(x,t) = - \frac{1}{t(1+\alpha)}y \Phi(y)$,
where $y = x {t^{-\frac 1{1+\alpha}}}$ 
and by Corollary \ref{co-ald} we see that
$\frac{\partial }{\partial x}D^\alpha_{x_C}u(x,t) = 
\frac 1t \frac{\partial }{\partial y}D^\alpha_{y_C}u(y,t)$. Thus, due to 
$$
\frac{\partial }{\partial x}D^\alpha_{x_C} u = 
D^\alpha_{x_{RL}} \frac{\partial }{\partial x}u 
$$
and (\ref{RL-C}) we obtain
$$
\left(\frac\partial{\partial t} - \frac{\partial }{\partial x}D^\alpha_{x_C} \right) u(x,t)= - \frac 1t 
\left(\frac{y\Phi(y)}{1+\alpha}+ D^\alpha_{y_{RL}}\Phi(y)\right) = 
- \frac 1t 
\left(\frac{y\Phi(y)}{1+\alpha}+ D^\alpha_{y_{C}}\Phi(y)\right)
- \frac {y^{-\alpha}}{t\Gamma(1-\alpha)} .
$$
Now, we invoke Proposition \ref{rnieE} to conclude that the expression in parenthesis vanishes. Thus, (\ref{nier}) follows.


Let us suppose our claim is not valid and the set
$$
A =\{x>0: \Phi(x) 
<0\}
$$
is not empty. Since $\Phi(0) =1$, 
we see that $\inf A =: x_0 >0$. Due to the continuity of $\Phi$
there is $x_1> x_0$ such that 
$$
u(x, 1) 
>0\qquad\hbox{for } x\in (0,x_1).
$$
Let us set 
\begin{equation}\label{def-om}
\Omega =\{(x,t)\in (0,\infty)^2:\ x\in (0 , x_1 t^{-1/(1+\alpha)}), t\in (1, 2)\}  .  
\end{equation}
By the weak maximum principle for (\ref{nier}) in non-cylindrical regions, see  \cite[Lemma 8]{KR2},
we have
$$
\sup\{ u(x,t): (x,t) \in \Omega\} = \max\{ u(x,t): (x,t)\in \partial\Omega \setminus (\{2\}\times \bR_+)\} =: M.
$$
However, since $u(0, t) =0$ and $u(x_1 t^{-1/(1+\alpha)}, t)= u(x_1,1)< u(x_0, 1) $ for $t\in[1,2]$, then we see that
$$
M = u(x_0, 1).
$$
If we had the strong maximum principle, then we would have reached a contradiction. But we do not have it, thus, we have to continue our argument. For a positive $\epsilon$ and $(x,t) \in \bR_+ \times(1,\infty)$ we set,
$$
v_\epsilon(x,t) = \epsilon c_\alpha x^{1+\alpha} + \epsilon(t -1),
$$
where $c_\alpha = \frac 1{(1+\alpha)\Gamma(1+\alpha)}.$  Due to the choice of $c_\alpha$ we see that $\frac{\partial v_\epsilon }{\partial t} - \frac{\partial  }{\partial x} D^\alpha v_\epsilon = 0$. Thus, the sum $u + v_\epsilon$ satisfies (\ref{nier}) and we may apply the weak maximum principle, see  \cite[Lemma 8]{KR2}, which yields,
\begin{equation}\label{maxp}
    \sup\{ (u+ v_\epsilon)(x,t): (x,t) \in \Omega\} = \max\{ (u+ v_\epsilon)(x,t): (x,t)\in \partial\Omega \setminus (\{2\}\times \bR_+)\} =: M_\epsilon.
\end{equation}
We want to select such $\epsilon$ that $M_\epsilon = \max (u+v_\epsilon)(x,1)$.
We notice that $(u+ v_\epsilon)(0,t)= \epsilon(t-1)\le \epsilon$ and we may restrict $\epsilon$ so that $\epsilon< u(x_1, 1)$. We take even smaller $\epsilon$ so that 
$$
\frac{\partial}{\partial x}(u+ v_\epsilon)(x_1,t)<0.
$$
Hence, there is $x_2\in (x_0, x_1)$ such that 
$$
\max\{ (u+ v_\epsilon)(x,1): x \in (0, x_1)\} = (u+ v_\epsilon)(x_2,1)
$$
We want to guarantee that $M_\epsilon$ equals $(u+ v_\epsilon)(x_2,1) = u(x_2, 1) + \epsilon c_\alpha x_2^{1+\alpha}.$

We look at $u+v_\epsilon$ on the last part of the parabolic boundary of $\Omega$, 
$$
\gamma = \{ (x_1 t^{1/(1+\alpha)},t ):\ t\in (1, 2)\}.
$$
For $(x,t)\in \gamma$ we have
$$
(u+v_\epsilon)(x_1 t^{1/(1+\alpha)},t) = u(x_1, 1) + \epsilon (c_\alpha x_1^{1+\alpha}t + t -1) = u(x_1, 1) + \epsilon c_\alpha x_1^{1+\alpha} +(t -1)\epsilon (c_\alpha x_1^{1+\alpha} + 1).
$$
We see that right-hand-side above is greater than $(u+v_\epsilon)(x_1, 1)$ for $t\in (1,2)$. At the same time 
we see that the left-hand-side attains its maximum for $t=2$ and we may take $\epsilon$ so small that
$$
\epsilon (c_\alpha x_1^{1+\alpha} + 1) + (u+v_\epsilon)(x_1, 1) < (u+ v_\epsilon)(x_2,1) = M_\epsilon.
$$
Now, we shall see that $\sup_\Omega (u+ v_\epsilon) > M_\epsilon$. Let us consider points $(x,t)$ of the following form, $(x_2 t^{1/(1+\alpha)}, t)$, $t\in (1,2).$ Now, we compute
values of $(u+ v_\epsilon)$ there for $t>1$. We obtain,
\begin{eqnarray*}
(u+ v_\epsilon)(x_2 t^{1/(1+\alpha)}, t) &= &u(x_2, 1) + \epsilon c_\alpha x_2^{1+\alpha}t
+ \epsilon(t-1) = u(x_2, 1) + \epsilon c_\alpha x_2^{1+\alpha} + \epsilon (t-1) (c_\alpha x_1^{1+\alpha} + 1) \\
&> &(u+ v_\epsilon)(x_2, 1)= M_\epsilon.
\end{eqnarray*}
But this inequality violates (\ref{maxp}). Thus, our claim follows.
\end{proof}

In fact, in the course of proof of Lemma \ref{le-posit}, we established the following fact:
\begin{lemma}
If $U\in C^{1+\alpha}([0,\infty))$ and $u(x,t) = U(x t^{-\frac1{1+\alpha}})$ satisfies inequality (\ref{nier}) in $\Omega$ defined in (\ref{def-om}), where $x_1>0$ is now arbitrary, then $u(x,1) \equiv U(x)$ cannot attain a maximum inside $(0, x_1)$.
\end{lemma}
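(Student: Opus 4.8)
The plan is to extract exactly the engine of the preceding proof and restate it as a general principle, stripping away the special choice $x_1>x_0$ used in Lemma \ref{le-posit}. First I would observe that the only property of $\Phi$ used in establishing (\ref{nier}) was that it solves the fractional ODE (\ref{ode}); the computation producing $\left(\frac{\partial}{\partial t}-\frac{\partial}{\partial x}D^\alpha_{x_C}\right)u=-\frac{y^{-\alpha}}{t\Gamma(1-\alpha)}<0$ relied on Corollary \ref{co-ald}, the commutation $\frac{\partial}{\partial x}D^\alpha_{x_C}=D^\alpha_{x_{RL}}\frac{\partial}{\partial x}$, and the conversion formula (\ref{RL-C}). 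So I would simply hypothesize that $u(x,t)=U(xt^{-1/(1+\alpha)})$ with $U\in C^{1+\alpha}([0,\infty))$ already satisfies (\ref{nier}) in $\Omega$, and not reprove why.

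Next I would set up the contradiction argument as in Lemma \ref{le-posit}: suppose $U$ attains a maximum at some interior point $x_0\in(0,x_1)$, so that $u(\cdot,1)=U$ has its maximum over $(0,x_1)$ at $x_0$. The weak maximum principle for (\ref{nier}) in the non-cylindrical region $\Omega$, via \cite[Lemma 8]{KR2}, forces $\sup_\Omega u = M := \max\{u(x,t):(x,t)\in\partial\Omega\setminus(\{2\}\times\bR_+)\}$. Then I would introduce the same barrier $v_\epsilon(x,t)=\epsilon c_\alpha x^{1+\alpha}+\epsilon(t-1)$ with $c_\alpha=\frac1{(1+\alpha)\Gamma(1+\alpha)}$, which is a genuine solution of the equation, so that $u+v_\epsilon$ still satisfies (\ref{nier}) and the maximum principle (\ref{maxp}) applies to it.

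The heart of the argument, which I would transcribe verbatim from Lemma \ref{le-posit}, is the delicate choice of $\epsilon$ that simultaneously: keeps the value at the corner $x=0$ small ($\epsilon<U(x_0)$-type bound); makes $\frac{\partial}{\partial x}(u+v_\epsilon)(x_1,t)<0$ so that on the right edge the maximum over $x$ is attained at some interior $x_2\in(0,x_1)$; and makes the value along the slanted boundary $\gamma=\{(x_1t^{1/(1+\alpha)},t)\}$ stay below $(u+v_\epsilon)(x_2,1)=M_\epsilon$. Having arranged $M_\epsilon=(u+v_\epsilon)(x_2,1)$, I would then evaluate $u+v_\epsilon$ along the ray $(x_2t^{1/(1+\alpha)},t)$, $t\in(1,2)$, and note that since $x_2<x_1$ the coefficient $c_\alpha x_2^{1+\alpha}+1$ is positive, so the value strictly exceeds $M_\epsilon$ for $t>1$, contradicting (\ref{maxp}). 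This produces the contradiction and rules out an interior maximum.

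The main obstacle, and the only nontrivial point, is verifying that the self-similar structure $u(x,t)=U(xt^{-1/(1+\alpha)})$ behaves correctly under the scaling that defines $\partial\Omega$: the values $u(x_1t^{-1/(1+\alpha)},t)$ and $u(x_2t^{1/(1+\alpha)},t)$ must be tracked carefully against the polynomial growth of $v_\epsilon$, since it is precisely the competition between the $t$-decay of $U$ along one boundary piece and the $t$-growth of the barrier that forces the sign. Everything else is a direct repetition of the machinery already assembled, so I expect the bulk of the work to be merely checking that no step used the specific inequality $x_1>x_0$ rather than the generic $x_1>0$; the argument is genuinely local around the interior maximizer, so replacing $x_0=\inf A$ by an arbitrary interior maximizer costs nothing.
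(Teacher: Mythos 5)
Your proposal is correct and follows essentially the same route as the paper, which offers no separate proof for this lemma but explicitly claims it was ``established in the course of proof of Lemma \ref{le-posit}''; your plan of rerunning that barrier--plus--weak-maximum-principle argument with an arbitrary interior maximizer in place of the specific $x_0=\inf A$, noting that only the hypothesis that $u$ satisfies (\ref{nier}) (not the provenance of $U$) is used, is exactly what the authors intend. The only caveats are cosmetic ones you inherit from the paper itself, e.g.\ the sign discrepancy between $x_1t^{-1/(1+\alpha)}$ in (\ref{def-om}) and $x_1t^{1/(1+\alpha)}$ in the curve $\gamma$, and the fact that $u$ is constant (not decaying) along the self-similar rays, which is precisely what makes the growing barrier $v_\epsilon$ produce the contradiction.
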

We notice that the same argument, after the necessary obvious changes,  yields:
\begin{lemma}\label{le-pom}
If $U\in C^{1+\alpha}([0,\infty))$ and $u(x,t) = U(x t^{-\frac1{1+\alpha}})$ satisfies inequality 
\begin{equation}\label{2nier}
 \left(\frac\partial{\partial t} - \frac{\partial }{\partial x}D^\alpha_{x_C} \right) u>0
\end{equation}
in $\Omega$ defined in (\ref{def-om}), where $x_1>0$ is now arbitrary, then $u(x,1) \equiv U(x)$ cannot attain a minimum inside $(0, x_1)$. \qed
\end{lemma}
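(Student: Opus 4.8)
The statement is the exact dual of the maximum version established just above, so the plan is to deduce it by linearity rather than to repeat the perturbation argument from scratch. The operator $L := \frac{\partial}{\partial t} - \frac{\partial}{\partial x} D^\alpha_{x_C}$ is linear: the Caputo derivative $D^\alpha_{x_C} = I^{1-\alpha}\frac{d}{dx}$ is a composition of linear maps, and $\partial_t$, $\partial_x$ are linear. Hence if $u$ satisfies (\ref{2nier}), i.e.\ $Lu>0$ in $\Omega$, then $\tilde u := -u$ satisfies $L\tilde u = -Lu < 0$, which is precisely inequality (\ref{nier}).

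First I would verify that all hypotheses of the foregoing (maximum) lemma transfer to $\tilde u$. Setting $\tilde U := -U$, we have $\tilde U \in C^{1+\alpha}([0,\infty))$, and $\tilde u(x,t) = -U(xt^{-1/(1+\alpha)}) = \tilde U(xt^{-1/(1+\alpha)})$, so the self-similar form is preserved; moreover, as noted, $\tilde u$ satisfies (\ref{nier}) in the same region $\Omega$ of (\ref{def-om}). The foregoing lemma then applies verbatim to $\tilde u$ and yields that $\tilde U(x) = \tilde u(x,1)$ cannot attain a maximum in the interior of $(0,x_1)$. Since a maximum of $-U$ at an interior point is exactly a minimum of $U$ there, this is the desired conclusion.

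Alternatively, one can run the argument of Lemma \ref{le-posit} directly, with the roles of ``max'' and ``min'' interchanged, which is the route suggested by the phrase ``the same argument after the necessary obvious changes.'' In that version the weak maximum principle of \cite[Lemma 8]{KR2} is invoked in its minimum form (again obtained by applying it to $-u$), and the same perturbation $v_\epsilon(x,t) = \epsilon c_\alpha x^{1+\alpha} + \epsilon(t-1)$, which is an exact solution $Lv_\epsilon = 0$, is now \emph{subtracted} from $u$. The key structural facts are unchanged: $u$ is constant along each self-similar curve $x = c\,t^{1/(1+\alpha)}$, since there $xt^{-1/(1+\alpha)} \equiv c$, whereas $v_\epsilon$ is strictly increasing in $t$ along such a curve, so $u - v_\epsilon$ strictly decreases along it. Choosing $\epsilon$ small, one forces $u - v_\epsilon$ to drop strictly below its minimum over the parabolic boundary, contradicting the weak minimum principle and ruling out the supposed interior minimum of $U$.

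I do not expect a genuine obstacle here: the substantive work, namely converting the merely \emph{weak} maximum principle into a contradiction by means of the self-similar perturbation, was already carried out in the proof of Lemma \ref{le-posit}. The only point requiring care is confirming that the sign reversal preserves every ingredient, namely that $L$ is linear so that $-u$ is a strict subsolution, that $v_\epsilon$ remains an exact solution so that the perturbed function stays a strict sub- or supersolution, and that $-U$ inherits both the regularity $C^{1+\alpha}([0,\infty))$ and the self-similar structure; once these are checked, the conclusion is immediate.
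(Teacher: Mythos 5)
Your proposal is correct and matches the paper's intent: the paper offers no separate proof, asserting only that the argument of Lemma \ref{le-posit} goes through ``after the necessary obvious changes,'' which is exactly your second route. Your primary route --- applying the already-established maximum version to $-u$, using linearity of $\frac{\partial}{\partial t}-\frac{\partial}{\partial x}D^\alpha_{x_C}$ so that (\ref{2nier}) for $u$ becomes (\ref{nier}) for $-u$ --- is the tidy one-line way to make those ``obvious changes'' rigorous, and all hypotheses (regularity and self-similar form of $-U$) transfer as you check.
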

This observation immediately implies part (3) of Theorem \ref{thm-main}.
\begin{lemma} \label{le-dec}
For all  $t>0$ function $\cE(\cdot,t)$ is decreasing.
\end{lemma}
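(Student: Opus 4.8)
The plan is to reduce monotonicity of $\cE(\cdot,t)$ to monotonicity of the profile $\Phi$ and then apply Lemma \ref{le-pom}. Since $\cE(x,t)=a_0t^{-1/(1+\alpha)}\Phi(xt^{-1/(1+\alpha)})$ with $a_0>0$, and for fixed $t>0$ the map $x\mapsto xt^{-1/(1+\alpha)}$ is an increasing bijection of $(0,\infty)$, it suffices to prove that $\Phi$ from (\ref{def-phi}) is decreasing on $(0,\infty)$. To set up Lemma \ref{le-pom}, whose hypothesis requires a function exactly of the form $u(x,t)=U(xt^{-1/(1+\alpha)})$, I would take $U=\Phi$, i.e. $u(x,t):=\Phi(xt^{-1/(1+\alpha)})$.

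The key computation is to verify that this $u$ satisfies the strict inequality (\ref{2nier}). Writing $y=xt^{-1/(1+\alpha)}$, the chain rule gives $u_t=-\frac{y\Phi'(y)}{(1+\alpha)t}$, while Corollary \ref{co-ald} with $\lambda=1/t$ yields $(D^\alpha_x u)_x(x,t)=\frac1t\,(D^\alpha_y\Phi)_y(y)$. Differentiating the fractional ODE of Proposition \ref{rnieE}, namely $D^\alpha_{y_C}\Phi(y)=-\frac{1}{1+\alpha}y\Phi(y)$, gives $(D^\alpha_y\Phi)_y(y)=-\frac{1}{1+\alpha}\big(\Phi(y)+y\Phi'(y)\big)$. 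Substituting, the $y\Phi'(y)$ terms cancel and one is left with
$$
\left(\frac\partial{\partial t}-\frac{\partial}{\partial x}D^\alpha_{x_C}\right)u=\frac{\Phi(y)}{(1+\alpha)t}>0,
$$
the positivity being exactly Lemma \ref{le-posit}. Since $\Phi$ is smooth on $(0,\infty)$ and belongs to $C^{1+\alpha}([0,\infty))$ (its expansion at the origin is $\Phi(x)=1-\frac{x^{1+\alpha}}{(1+\alpha)\Gamma(2+\alpha)}+\cdots$), Lemma \ref{le-pom} applies and tells us that for every $x_1>0$ the minimum of $\Phi$ over $[0,x_1]$ is attained only at $x=0$ or $x=x_1$, i.e. it equals $\min(1,\Phi(x_1))$.

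It remains to upgrade "no interior minimum" to genuine monotonicity, which I expect to be the only delicate point. Fix $0<a<b$ and use the above on $[0,b]$: the minimum is $\min(1,\Phi(b))$. If it equals $\Phi(b)$, then $\Phi(a)>\Phi(b)$ and we are done. The only alternative is that the minimum equals $1=\Phi(0)$, which would force $\Phi\ge1$ throughout $[0,b]$; but the expansion of $\Phi$ at the origin shows $\Phi(x)<1$ for all sufficiently small $x>0$, a contradiction. Hence the minimum over $[0,b]$ is always attained at the right endpoint, so $\Phi(b)<\Phi(a)$ whenever $a<b$; thus $\Phi$, and with it $\cE(\cdot,t)$, is decreasing. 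The main obstacle is precisely this final implication: Lemma \ref{le-pom} only excludes interior minima, so one must feed in the local behavior of $\Phi$ near $0$ to rule out the scenario in which $\Phi$ first rises above its initial value $1$ before eventually decaying.
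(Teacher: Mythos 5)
Your proof is correct, and while it rests on the same key tool as the paper's proof --- the no-interior-minimum principle of Lemma \ref{le-pom} --- the way you feed that lemma is genuinely different and, in my view, cleaner. The paper works with $\cE$ itself, which satisfies (\ref{rn1}) with \emph{equality}, so to get the strict inequality (\ref{2nier}) it must perturb by $-\epsilon x t^{-1/(1+\alpha)}$, argue for each $v_\epsilon$, and then let $\epsilon\to 0^+$; that limit a priori only yields that $\cE(\cdot,1)$ is non-increasing. You instead apply Lemma \ref{le-pom} to the unnormalized profile $u(x,t)=\Phi(xt^{-1/(1+\alpha)})$, i.e.\ you drop the prefactor $t^{-1/(1+\alpha)}$; your computation correctly shows that this single change turns the equation into the strict supersolution inequality
\begin{equation*}
\Bigl(\frac{\partial}{\partial t}-\frac{\partial}{\partial x}D^\alpha_{x_C}\Bigr)u=\frac{\Phi(y)}{(1+\alpha)t}>0,
\end{equation*}
with positivity supplied by Lemma \ref{le-posit}, so no perturbation and no limiting step are needed, and you obtain strict monotonicity directly. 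The two proofs also diverge in how they rule out the degenerate endpoint scenario: the paper shows via the ODE (\ref{ode}) and positivity of $\Phi$ that $\Phi$ cannot be increasing on any interval $(0,x_0)$, whereas you use the expansion $\Phi(x)=1-\frac{x^{1+\alpha}}{(1+\alpha)\Gamma(2+\alpha)}+\cdots$ to see $\Phi<1$ near the origin (you could equally well cite (\ref{estE})). Both routes are valid; yours buys strictness and brevity, while the paper's $\epsilon$-perturbation technique is the more robust template (it is what the proof of Lemma \ref{le-posit} itself has to do, since there no such sign miracle is available).
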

\begin{proof}
Since $\cE$ is a self-similar solution, we may restrict our attention to $t=1.$ Let us suppose our claim is false and $\cE(\cdot,1)$ attains a minimum at $x_0$. We can find $x_1>x_0$ such that $\cE(x_1,1)> \cE(x_0,1)$. Now, we choose $\epsilon>0$ sufficiently small that 
$$
v_\epsilon(x,t) = \cE(\cdot,t) - \epsilon \frac x{t^{\frac 1{1+\alpha}}},
$$
so that $v_\epsilon(\cdot, 1)$ attains its minimum at $x_2< x_0$. In particular this implies that $v_\epsilon(x_2, 1) < v_\epsilon(x_1, 1).$ Moreover, for all $\epsilon>0$ inequality (\ref{2nier}) is satisfied. As a result we may apply Lemma \ref{le-pom} to $v_\epsilon(\cdot , 1)$ to deduce that $v_\epsilon(\cdot , 1)$ cannot attain any minimum.

We observe that for no positive $x_0$ function $\Phi$ is increasing on $(0, x_0).$ If  such a point existed, then $D^\alpha \Phi \ge 0$ on $(0, x_0)$, but this contradicts (\ref{ode}) due to positivity of $\Phi.$ Hence, $v_\epsilon$ cannot attain any maximum in $(0,\infty)$.

These observations imply that $v_\epsilon(\cdot , 1)$ is decreasing. Indeed, if $0<x_1 <x_2$ and $v_\epsilon(x_1, 1) < v_\epsilon( x_2, 1)$. Then, taking into account that $v_\epsilon(0, 1)=1> v_\epsilon(x_2,1)$ we deduce that $v_\epsilon(\cdot , 1)$ must attain a minimum in the interval $(0,x_2)$ but this is impossible. Hence, the claim follows.

Finally, we notice that $\cE(x,1) = \lim_{\epsilon\to 0^+} v_\epsilon(x, 1))$, which implies monotonicity of $\cE(\cdot, 1).$
\end{proof}
 
Now, we will show that $\cE$ is integrable over the positive half-line.
\begin{lemma}\label{le-int}
Function $\Phi$  defined in (\ref{def-phi})
is bounded with a bound uniform in $\alpha$ and it is integrable over $(0,\infty).$
\end{lemma}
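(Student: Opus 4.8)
The boundedness with a bound uniform in $\alpha$ is immediate from what has already been proved: by Lemma \ref{le-posit} one has $\Phi>0$, by Lemma \ref{le-dec} the function $\Phi=\cE(\cdot,1)$ (up to the positive factor $a_0$) is decreasing, and $\Phi(0)=1$ because the series defining $E_{\alpha,1+1/\alpha,1/\alpha}$ has constant term $c_0=1$. Hence $0<\Phi(x)\le 1$ for every $x>0$ and every $\alpha\in(0,1)$, which is the desired uniform bound $1$. What remains is the decay of $\Phi$ at infinity, and the plan is to turn the fractional ODE (\ref{ode}) into a Volterra integral equation and to read off from it an algebraic decay rate of the form $\Phi(y)\le C_\alpha\,y^{-1-\alpha}$.

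To this end I would apply the fractional integral $I^\alpha$ from (\ref{frak1}) to both sides of (\ref{ode}). By the inversion formula (\ref{cal-roz}) together with $\Phi(0)=1$ the left-hand side becomes $\Phi(y)-1$, while writing out $I^\alpha$ of the map $s\mapsto -\frac{1}{1+\alpha}s\Phi(s)$ gives
\[
\Phi(y)=1-\frac{1}{(1+\alpha)\Gamma(\alpha)}\int_0^y (y-s)^{\alpha-1}\,s\,\Phi(s)\,ds .
\]
Since $\Phi(y)\ge 0$ by positivity, the integral on the right is at most $(1+\alpha)\Gamma(\alpha)$. Now I would bound the kernel from below: for $0\le s<y$ one has $(y-s)^{\alpha-1}\ge y^{\alpha-1}$, because $\alpha-1<0$ and $y-s\le y$. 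Pulling this factor out of the integral yields
\[
y^{\alpha-1}\int_0^y s\,\Phi(s)\,ds\le (1+\alpha)\Gamma(\alpha),
\qquad\text{so}\qquad
\int_0^y s\,\Phi(s)\,ds\le (1+\alpha)\Gamma(\alpha)\,y^{1-\alpha}.
\]

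The last step uses monotonicity once more. Because $\Phi$ is decreasing, $\int_0^y s\,\Phi(s)\,ds\ge \Phi(y)\int_0^y s\,ds=\tfrac12 y^2\,\Phi(y)$, and combining this with the previous bound gives $\Phi(y)\le 2(1+\alpha)\Gamma(\alpha)\,y^{-1-\alpha}$ for all $y>0$. Since $1+\alpha>1$, this power is integrable near infinity, while near the origin $\Phi\le1$ is bounded; hence $\Phi\in L^1(0,\infty)$. The genuinely delicate point is the derivation of the algebraic decay: the operator $D^\alpha_C$ is nonlocal, so rather than differentiating the ODE I invert it with $I^\alpha$, and the argument hinges on using positivity to bound the Volterra integral and then monotonicity to convert that integral bound into a pointwise estimate. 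Note finally that the constant $2(1+\alpha)\Gamma(\alpha)$ degenerates as $\alpha\to0^+$, which is exactly why only the boundedness, and not the integrability constant, is asserted to be uniform in $\alpha$.
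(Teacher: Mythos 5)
Your argument is correct, but for the integrability part it takes a genuinely different route from the paper. The paper proves boundedness by applying $I^\alpha$ to the inequality $D^\alpha_C\Phi<0$ (via (\ref{cal-roz})), and then proves integrability without any pointwise decay: it rewrites (\ref{ode}) through the Riemann--Liouville derivative using (\ref{RL-C}), integrates the resulting identity over $[1,R]$, integrates by parts, and bounds $\int_1^R\Phi$ uniformly in $R$. You instead extract a pointwise algebraic decay $\Phi(y)\le 2(1+\alpha)\Gamma(\alpha)\,y^{-1-\alpha}$ from the Volterra form $\Phi(y)=1-\frac{1}{(1+\alpha)\Gamma(\alpha)}\int_0^y(y-s)^{\alpha-1}s\Phi(s)\,ds$, using positivity to bound the convolution by a constant, the kernel bound $(y-s)^{\alpha-1}\ge y^{\alpha-1}$, and monotonicity to convert $\int_0^y s\Phi(s)\,ds\ge\frac12 y^2\Phi(y)$. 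This is in essence the mechanism the paper deploys only later, in Lemma \ref{le-zanik}, to get $x\Phi(x)\le 2$; by keeping the whole interval $[0,y]$ rather than restricting to $[x-1,x]$ you gain the extra factor $y^{-\alpha}$, so your estimate is strictly stronger than Lemma \ref{le-zanik} for fixed $\alpha$, yields integrability immediately, and avoids the $J_1$, $J_2$ bookkeeping; moreover your constant degenerates only as $\alpha\to0^+$, whereas the paper's bound $\frac{(2-\alpha)(1+\alpha)}{\alpha(1-\alpha)\Gamma(1-\alpha)}$ blows up at both endpoints. Two small remarks: your use of Lemma \ref{le-dec} for monotonicity is legitimate since that lemma precedes this one and its proof does not use integrability, but note that it is stated for $\cE=a_0\Phi$ whose normalizing constant $a_0$ is only well defined once integrability is known, so you should (as you implicitly do) read it as a statement about $\Phi$ itself; and the paper's boundedness argument, unlike yours, does not need monotonicity at all, only positivity.
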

\begin{proof}
We will show first the boundedness of $\Phi$. 
Due to Proposition   \ref{rnieE} 
and Lemma \ref{le-posit}, we notice,
$$
D_{C}^\alpha \Phi(x)
=  - \frac{x}{1+\alpha} \Phi(x) <0.
$$
We may apply the fractional integration operator $I^\alpha$ to both sides of the above inequality. Due to (\ref{cal-roz}) we obtain,
\begin{equation}\label{r-oszac}
\Phi(x) - \Phi(0)
= 
I^\alpha D_C^\alpha \Phi(x) <0.
\end{equation}
Hence,
\begin{equation}\label{estE}
\Phi(x) \equiv E_{\alpha, 1+1/\alpha, 1/\alpha}(- \frac{x^{1+\alpha}}{1+\alpha}) 
<  \Phi(0) = 1.
\end{equation}
Let us stress that the estimate (\ref{estE}) is uniform in $\alpha.$

Now, we shall see that boundedness of $\Phi$ implies its integrability. For this purpose
we rewrite (\ref{ode}) 
using (\ref{RL-C}) as follows,
$$
\frac{1}{1+\alpha}\Phi(x) = \frac{x^{-1-\alpha}}{\Gamma(1-\alpha)}
- \frac {x^{-1}}{\Gamma(1-\alpha)}\frac d{dx}\int_0^x\frac{\Phi(t)}{(x-t)^{1-\alpha}}\,dt.
$$
We integrate it over $[1,R]$ and we reach,
$$
\frac{\Gamma(1-\alpha)}{1+\alpha} \int_1^R \Phi(s)\,ds \le
\int_1^R x^{-1-\alpha}\,dx +
\left|  \int_1^R x^{-1} \frac d{dx}\int_0^x\frac{\Phi(t)}{(x-t)^{1-\alpha}}\,dtdx
\right| = J_1 + |J_2|.
$$
In the second term we integrate by parts. This yields,
$$
 J_2 = \int_1^R x^{-2} \int_0^x\frac{\Phi(t)}{(x-t)^{1-\alpha}}\,dt dx
+ \left. 
 x^{-1} \int_0^x\frac{\Phi(t)}{(x-t)^{1-\alpha}}\,dt
 \right|_{x=1}^{x=R}. 
$$
Now, we use (\ref{estE}) and positivity of $\Phi$ to see that
$$
 J_2  \le \frac{1 - R^{\alpha-1}}{\alpha(1-\alpha)}
+ \frac{R^\alpha}{\alpha R} < \frac{1}{\alpha(1-\alpha)}. 
$$
If we combine it with an easy estimate on $J_1$ we arrive at
$$
\int_1^R \Phi(s)\,ds \le  \frac{(2-\alpha)(1+\alpha)}{\alpha(1-\alpha)\Gamma(1-\alpha)}.
$$
Our claim follows.
\end{proof}
We notice that the  estimate for the integral of $\Phi$ blows up at $\alpha= 1$ and $\alpha =0$ which shows our method is not optimal, because the fundamental solution to the heat equation is integrable due to its exponential decay at infinity.

\bigskip
We are now ready to finish {\it the proof of Theorem \ref{thm-main}}. The derivation is performed in Lemmas \ref{le-sss} and \ref{le-d}. The properties of $\Phi$
were established in
Lemmas \ref{le-posit}, \ref{le-dec} and \ref{le-int}. In particular, they guarantee that the integral
$$
\frac{1}{a_0} = 2\int_0^\infty E_{\alpha, 1+1/\alpha, 1/\alpha}( -x^{1+\alpha}/ (1+\alpha))\,dx
$$
is finite and positive. Hence, the definition of $a_0$ given in (\ref{df-a0}) is correct and $\cE$ is well-defined with the properties we stated. \qed

\begin{remark}
Actually, our proof   shows  that  for all $t>0$ we have $\cE(\cdot, t)\in C^{1+\alpha}([0,\infty))$ and this regularity is optimal.
\end{remark}

\bigskip
We constructed $\cE$ on $\bR_+^2$. In order  to discuss its properties leading to a justification of the name 'fundamental solution' we have to extend $\cE$  to $\bR_+$, without changing the notation. Actually, the function $x^{1+\alpha}$ is naturally defined for negative argument as $|x|^{1+\alpha}$. We also set  $\cE$ equal to  zero on $\bR \times (-\infty, 0]$, so finally $$\cE(x,t) = \cE(|x|,t) \chi_{\bR_+}(t).$$
We do not
want to discuss the action of $D^\alpha_{x_C}$ on $\mathscr{D}(\bR)$ so we will not try to show that $(\frac \partial{\partial t} - \frac \partial{\partial x}D^\alpha_{x_C}) \cE = \delta_0$. Instead we will justify the representation formulas for solution to (\ref{rn1}) augmented with the initial and boundary data. In fact, we will reuse the well-known formula derived with the help of the reflection principles for solutions to the heat equation on the half line.

\begin{theorem}\label{prop1}
Let us suppose that  $g\in L^p(0,\infty),$ where  $p\in[1,\infty)$ and $g$ has compact support, (resp. $g\in C^0_c([0,\infty))$
and   we set 
$$
\cE_t(x) = \cE(x,t).
$$
We define functions $w_1$, $w_2$ by the following formulas,
\begin{equation}\label{rn3.5}
w_1(x,t) = \int_0^\infty ( \cE(x-y,t) - \cE(x+y, t))g(y)\,dy,
\end{equation}
\begin{equation}\label{rn3.6}
w_2(x,t)  = \int_0^\infty ( \cE(x-y,t) + \cE(x+y, t))g(y)\,dy.
\end{equation}
Then, \\
(a) For all $t, R>0$ functions $w_1(\cdot,t)$ and $w_2(\cdot,t)$ belong to $C^{1+\alpha}([0,R])$, they are classical solutions to 
\begin{equation}\label{rn3}
\begin{array}{ll}\displaystyle{
\frac{\partial w }{\partial t} - \frac{\partial  }{\partial x} D^\alpha w = 0}      & x,t >0, \\
 ~ & ~\\
w(x,0) = g(x)     & x>0.
\end{array}
\end{equation}
In addition $w_1$ (resp. $w_2$) satisfies the Dirichlet (resp. Neumann) boundary condition,
\begin{equation}\label{r12.5}
    w(0,t) = 0,\qquad (\hbox{resp. } w_x(0,t)=0)\qquad \hbox{for }t>0.
\end{equation}
(b) Functions $w_1$ and $w_2$ belong to $L^\infty(\bR_+; L^p(\bR_+))$ if $p<\infty$ (resp. $w_1, w_2 \in L^\infty(\bR_+; C(\bR_+))$, when $g\in C^0_c([0,\infty))$).
The initial condition is satisfied in the sense below. However, when $g$ is continuous, we require $g(0) =0$ in case of the Dirichlet data,
\begin{equation}\label{r-ic}
 \lim_{t\to 0}\|w_1(\cdot, t) - g\|_{L^p} =0, \qquad(\hbox{resp. }
\lim_{t\to 0}\|w_2(\cdot, t) - g\|_{L^\infty} =0).   
\end{equation}
\end{theorem}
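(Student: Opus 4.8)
The plan is to realize both formulas as whole-line convolutions of the even-extended kernel against suitably reflected data, so that the boundary conditions become a statement about parity and only the equation itself requires genuine work. First I would extend $g$ to an odd function $g_{\mathrm o}$ on $\bR$ (Dirichlet) and to an even function $g_{\mathrm e}$ (Neumann); the change of variable $y\mapsto -y$ in the negative part of the convolution then gives $w_1(\cdot,t)=\cE(\cdot,t)\ast g_{\mathrm o}$ and $w_2(\cdot,t)=\cE(\cdot,t)\ast g_{\mathrm e}$ on $\bR_+$. Since $\cE(\cdot,t)$ is even, $w_1(\cdot,t)$ is odd and $w_2(\cdot,t)$ is even in $x$. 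The Dirichlet condition is then immediate, $w_1(0,t)=\int_0^\infty(\cE(-y,t)-\cE(y,t))g(y)\,dy=0$, and differentiating under the integral gives $\partial_x w_2(0,t)=\int_0^\infty(\cE_x(-y,t)+\cE_x(y,t))g(y)\,dy=0$ because $\cE_x(\cdot,t)$ is odd; equivalently, this is the vanishing derivative at $0$ of an even $C^1$ function.

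Next I would settle the regularity and boundedness assertions. Membership $w_i(\cdot,t)\in C^{1+\alpha}([0,R])$ would follow from the optimal regularity $\cE(\cdot,t)\in C^{1+\alpha}$ recorded after Theorem~\ref{thm-main}, the compact support of $g$, and the integrability of $\cE(\cdot,t)$ from Lemma~\ref{le-int}, which together license differentiation under the integral sign in both $x$ and $t$. The uniform bound $w_i\in L^\infty(\bR_+;L^p(\bR_+))$ is then Young's convolution inequality $\|w_i(\cdot,t)\|_{L^p}\le\|\cE(\cdot,t)\|_{L^1(\bR)}\|g\|_{L^p}$, where $\|\cE(\cdot,t)\|_{L^1(\bR)}=2\int_0^\infty\cE(x,t)\,dx=1$ by part~(4) of Theorem~\ref{thm-main}; the statement for continuous compactly supported $g$ is the analogous sup-norm estimate.

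The step I expect to be the main obstacle is verifying that $w_1,w_2$ solve \eqref{rn3} classically for $x>0$. The natural route is to move $\partial_t$ and $\partial_x D^\alpha_x$ inside the $y$-integral (justified by the previous step) and to reduce to the fact that $\cE$ solves the equation. The difficulty is that $D^\alpha_x$ has its base point fixed at $0$ and is therefore \emph{not} translation invariant, so---unlike for the heat equation---the shifted profiles $\cE(x\pm y,t)$ do not individually satisfy the equation: applying $I^{1-\alpha}$ to $\partial_s\cE(s\pm y,t)$ and differentiating produces base-point correction terms carried by the part of the fractional integral lying between $0$ and $y$ (and, for $0<x<y$, by the argument $x-y$ crossing the reflection point). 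The heart of the proof is to show that the odd/even reflection is exactly what makes these corrections cancel against one another. I would compute them explicitly using the representation \eqref{e:another} of Lemma~\ref{altdef} and the fractional ODE \eqref{ode} for $\Phi$, pairing the contributions of $\cE(x-y,t)$ and $\cE(x+y,t)$ through the parity of the reflected kernels; should a direct pointwise cancellation prove delicate, an alternative is an integral-transform argument in $t$ that reduces matters to the fractional ODE \eqref{ode} solved by the Mittag-Leffler profile $\Phi$, at which level the reflection can be checked.

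Finally I would verify the initial data \eqref{r-ic}. By self-similarity $\cE(x,t)=a_0t^{-1/(1+\alpha)}\Phi(|x|\,t^{-1/(1+\alpha)})$, and since $\int_{\bR}\cE(x,t)\,dx=1$ with the mass concentrating at the origin as $t\to0^+$, the family $\{\cE(\cdot,t)\}_{t>0}$ is an approximate identity on $\bR$. Standard mollifier estimates then yield $\|w_i(\cdot,t)-g\|_{L^p}\to0$ for $g\in L^p$ with compact support, and uniform convergence when $g\in C^0_c$. The one caveat is the continuous Dirichlet case: since $w_1(\cdot,t)$ is odd it forces $w_1(0,t)=0$, so the limit can match $g$ up to the boundary only if $g(0)=0$, which is precisely the hypothesis imposed in the statement.
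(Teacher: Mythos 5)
Your proposal follows essentially the same route as the paper's proof: realize $w_1$ and $w_2$ as whole-line convolutions of the even-extended kernel with the odd, respectively even, extension of $g$, get the $L^p$ bounds from Young's inequality and $\int_{\bR}\cE(x,t)\,dx=1$, read off the Dirichlet and Neumann conditions from parity, and obtain \eqref{r-ic} from the approximate-identity property of the self-similar family $\cE(\cdot,t)$. The one place where you diverge is instructive: you correctly single out the non-translation-invariance of $D^\alpha_{x_C}$ as the real obstacle --- the shifted profiles $\cE(x\pm y,t)$ do \emph{not} individually solve \eqref{rn3}, so the verification cannot reduce to ``the kernel solves the equation'' as it does for the heat equation, and some cancellation induced by the reflection must be exhibited. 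The paper's own proof does not display this cancellation: it commutes $D^\alpha_{x_C}$ and $\partial_x$ with the $y$-integration (Fubini plus differentiation under the integral sign) and then simply asserts $(\partial_t-\partial_x D^\alpha_{x_C})w_1=0$, which, taken literally, would require each shifted profile to solve the equation. So on the crux your plan is more demanding than the printed argument, but you also leave it unexecuted (you only propose to compute the base-point correction terms via \eqref{e:another} and \eqref{ode} and pair them by parity); to have a complete proof you would need to carry that computation through, and the paper gives you no template for it.
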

\begin{proof}
We will first check that $w_1$ and $w_2$ are well-defined, for this reason we begin with the first part of (b). We will rewrite $w_1$ and $w_2$ as a convolution of the data on $\bR_+$ with $\cE,$
\begin{equation}\label{r-rep}
w_1(x,t) = \int_{\bR}  \cE(x-y,t) \tilde g(y)\,dy, \qquad
w_2(x,t) = \int_{\bR}  \cE(x-y,t) \bar  g(y)\,dy ,
\end{equation}
where $\tilde g$ (resp. $\bar g$) is an odd extension, i.e. $\tilde g(-y) = - g(y)$ (resp. even extension, i.e. $\bar g(-y) = g(y)$) for $y>0.$
Since $\cE_t \in L^1(\bR)$ and $\int_\bR\cE_t \,dx =1$, then Young inequality for convolutions imply that $\cE_t * \tilde g, \cE_t * \bar g\in L^p(\bR)$, when $p<\infty$ and 
$$
\| \cE_t * \tilde g\|_{L^p(\bR)} \le 2 \| g\|_{L^p(\bR_+)},
\qquad \| \cE_t * \bar g\|_{L^p(\bR)} \le 2\| g\|_{L^p(\bR_+)}.
$$
When $g$ is bounded, then 
$$
\| \cE_t * \tilde g\|_{L^\infty} \le \| g\|_{L^\infty}, \qquad
\| \cE_t * \bar g\|_{L^\infty} \le \| g\|_{L^\infty}.
$$
We conclude that $w_1$ and $w_2$ are well-defined.

Let us argue that  $w_1$, $w_2$ are solutions to (\ref{rn3}). We will provide some details for $w_1$, (the proof for $w_2$ is the same). 
We first notice that the
kernel $\cE_t$ is a composition of an analytic function with $x\mapsto |x|^{1+\alpha}$, hence the convolution appearing in the definition of $w_i$, $i=1,2$ shares this kind of smoothness.
Since $w_1$ is a $C^{1+\alpha}$-function we may apply $D^\alpha_{x_C}$ to $v$ defined above. We obtain,
$$
D^\alpha_{x_C} w_1(x,t) = \frac1{\Gamma(1-\alpha)}\int_0^x \frac{dy}{(x-y)^\alpha}
\int_{-\infty}^\infty \frac \partial {\partial y} \cE_t (y+ s) \tilde g(s)\,dy,
$$
where we could interchange the integral over $\bR$ with the differentiation due to the integrability of $\frac \partial {\partial y} \cE_t (y+ s) \tilde g(s)$ with respect $s$ for all $y\in \bR$. Now, we notice that we may
invoke the Fubini Theorem to interchange the order of integrals. Thus, we conclude that
the partial integration operator $I^{1-\alpha}$ and the integration over $\bR_+$ with respect to $y$ commute and we see,
$$
D^\alpha_{x_C} w_1(x,t) = \int_{-\infty}^\infty  D^\alpha_{y_C} \cE_t (x+ y) \tilde g(y)\,dy.
$$
Now, due to regularity of $\cE_t$ we see that
$$
\frac \partial {\partial x} D^\alpha_{x_C} w_1= 
\int_{-\infty}^\infty   \frac\partial {\partial x}D^\alpha_{y_C} \cE_t (x+ s)\tilde g(s)\,dy.
$$
Thus, we conclude that $( \frac \partial {\partial t}  - \frac \partial {\partial x} D^\alpha_{x_C}) w_1 = 0$.  

Now, we check the boundary conditions. Since $w_1(\cdot,t)$ is continuous up to $x=0$ for $t>0$ we see that
$$
w_1(0,t) = \int_0^\infty ( \cE_t(-y) - \cE_t(y))g(y)\,dy =0.
$$
The RHS vanishes because $\cE $ is even.

The argument for $w_2$ is similar. We use that $\frac{\partial w_2}{\partial x}(\cdot, t)$ is continuous up to $\{x =0\}$ for $t>0$. For positive $x$ we have
$$
\frac{\partial w_2}{\partial x}(x,t)= -(1+\alpha) 
\int_0^\infty( \frac{\partial \cE_t}{\partial x}(x-y) \sgn(x-y)|x-y|^\alpha + 
\frac{\partial \cE_t}{\partial x}(x+y) \sgn(x-y)|x-y|^\alpha) g(y)\,dy.
$$
Thus, 
$$
\lim_{x\to0^+} \frac{\partial w_2}{\partial x}(x,t)= -(1+\alpha) 
\int_0^\infty( - \frac{\partial \cE_t}{\partial x}(-y) y^\alpha + 
\frac{\partial \cE_t}{\partial x}(y) y^\alpha) g(y)\,dy =0.
$$

Now, we turn our attention to the initial condition. We recall that (\ref{r-ic}) follows from the standard properties of convolution with a kernel whose integral is one.
%
\end{proof}
 
We present here convolution formulas to solve the non-homogeneous problem. After establishing them we may  say that indeed $\cE$ is a fundamental solution, because it behaves like one.
\begin{corollary}
Let us suppose that $f\in C^1([0,\infty)\times \bR_+)\cap C([0,\infty)^2)\cap L^\infty(\bR_+^2)$ and $f$ has a compact support. We set (in case of $w_3$ below we require that $f(0,t) =0$ for all $t>0$),
$$
w_3 (x,t) = \int_{0}^t \int_0^\infty (\cE(x-y, t-s) - \cE(x+y, t-s))f(y,s)\,dyds,
$$
$$
w_4 (x,t) = \int_{0}^t \int_0^\infty (\cE(x-y, t-s) + \cE(x+y, t-s))f(y,s)\,dyds.
$$
Then, for all $R>0$ we have $w_3,  w_4 \in C^{1+\alpha} ([0,R]\times(0,R])$ and $w_j(x,\cdot)\in C^\infty(0,\infty)$ for all $x>0$, $j=3,4$. Moreover, $w_3,  w_4$ are solutions to 
\begin{equation}\label{rn33}
\begin{array}{ll}\displaystyle{
\frac{\partial w }{\partial t} - \frac{\partial  }{\partial x} D^\alpha w = f}      & x,t >0, \\
 ~ & ~\\
w(x,0) = 0     & x>0.
\end{array}
\end{equation}
In addition, $w_3$ (resp. $w_4$) satisfies the Dirichlet (resp. Neumann) boundary conditions.
\end{corollary}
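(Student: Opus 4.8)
The plan is to recognize this as Duhamel's principle layered on top of Theorem~\ref{prop1}. For fixed $s\in(0,t)$ I introduce the inner integrals
\begin{equation*}
W_3(x,t;s)=\int_0^\infty\bigl(\cE(x-y,t-s)-\cE(x+y,t-s)\bigr)f(y,s)\,dy,
\end{equation*}
\begin{equation*}
W_4(x,t;s)=\int_0^\infty\bigl(\cE(x-y,t-s)+\cE(x+y,t-s)\bigr)f(y,s)\,dy,
\end{equation*}
so that $w_3(x,t)=\int_0^t W_3(x,t;s)\,ds$ and $w_4(x,t)=\int_0^t W_4(x,t;s)\,ds$. For frozen $s$ the map $(x,\tau)\mapsto W_3(x,s+\tau;s)$ is precisely the function $w_1$ of Theorem~\ref{prop1} built from the datum $g=f(\cdot,s)$, and likewise $W_4$ corresponds to $w_2$; note that the standing assumption $f(0,s)=0$ for $w_3$ furnishes the compatibility $g(0)=0$ that Theorem~\ref{prop1}(b) requires in the Dirichlet case. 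Consequently, for every $s$, the function $W_j(\cdot,\cdot;s)$ is a classical $C^{1+\alpha}$-in-$x$ solution of the homogeneous equation on $\{\tau>s\}$ satisfying $W_3(0,\tau;s)=0$, $\partial_x W_4(0,\tau;s)=0$, and the initial trace $\lim_{\tau\to s^+}W_j(\cdot,\tau;s)=f(\cdot,s)$.

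First I would check that $w_3,w_4$ are well defined with the claimed regularity. Since $\cE_t$ is an entire function composed with $x\mapsto|x|^{1+\alpha}$, Theorem~\ref{prop1}(a) gives $W_j(\cdot,t;s)\in C^{1+\alpha}$ in $x$ with constants governed by $\|f(\cdot,s)\|$; the self-similar form~\eqref{defE} contributes the prefactor $(t-s)^{-1/(1+\alpha)}$, whose exponent is $<1$, so the outer integral $\int_0^t\,ds$ converges absolutely near $s=t$ and transmits the $C^{1+\alpha}$ spatial regularity to $w_3,w_4$ on each $[0,R]$, uniformly for $t\in(0,R]$. Smoothness in time at fixed $x>0$ follows from the analyticity of $t\mapsto\cE(x,t)$ away from $t=0$ together with the same integrable bound, giving $w_j(x,\cdot)\in C^\infty(0,\infty)$. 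The initial condition $w_j(x,0)=0$ is immediate from the vanishing domain of integration.

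Next I would verify the equation. Differentiating $w_3(x,t)=\int_0^t W_3(x,t;s)\,ds$ in $t$ by the Leibniz rule produces the boundary term $\lim_{s\to t^-}W_3(x,t;s)$, which equals $f(x,t)$ by the initial-trace property applied jointly in $s$ and $\tau=t-s$, plus the interior term $\int_0^t\partial_tW_3(x,t;s)\,ds$. Because each $W_3(\cdot,\cdot;s)$ solves the homogeneous equation, $\partial_tW_3=\tfrac{\partial}{\partial x}D^\alpha_xW_3$ on $\{\tau>s\}$, so that
\begin{equation*}
\frac{\partial w_3}{\partial t}(x,t)=f(x,t)+\int_0^t\frac{\partial}{\partial x}D^\alpha_xW_3(x,t;s)\,ds.
\end{equation*}
If in addition one may pull $\tfrac{\partial}{\partial x}D^\alpha_x$ out of the time integral, that is $\int_0^t\tfrac{\partial}{\partial x}D^\alpha_xW_3\,ds=\tfrac{\partial}{\partial x}D^\alpha_xw_3$, then $\partial_tw_3-\tfrac{\partial}{\partial x}D^\alpha_xw_3=f$ follows at once. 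The boundary conditions are then read off directly: $w_3(0,t)=\int_0^tW_3(0,t;s)\,ds=0$ gives the Dirichlet condition, and $\partial_xw_4(0,t)=\int_0^t\partial_xW_4(0,t;s)\,ds=0$ gives the Neumann condition, the latter after the milder interchange of a single $\partial_x$ with $\int_0^t\,ds$, legitimate by the uniform $C^{1+\alpha}$ bounds. The argument for $w_4$ is otherwise identical.

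The main obstacle is exactly the interchange of $\tfrac{\partial}{\partial x}D^\alpha_x$ with $\int_0^t\,ds$. By the scaling in~\eqref{defE} the integrand $\tfrac{\partial}{\partial x}D^\alpha_xW_3(x,t;s)$ naively behaves like $(t-s)^{-1}$ as $s\to t^-$, which is not integrable, so one cannot simply differentiate under the integral sign. The remedy is a cancellation: since $\int_{\bR}\cE_\tau\,dx=1$ is constant in $\tau$, the spatial mean of $\partial_\tau\cE_\tau$ vanishes, and testing this mean-zero kernel against the $C^1$ datum $f$, by writing $f(y,s)=f(x,s)+[f(y,s)-f(x,s)]$ and discarding the constant-in-$y$ part that $\tfrac{\partial}{\partial x}D^\alpha_x$ annihilates, improves the effective singularity from $(t-s)^{-1}$ to an integrable power of order $(t-s)^{-\alpha/(1+\alpha)}$. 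Concretely I would split $\int_0^t=\int_0^{t-\eta}+\int_{t-\eta}^t$, differentiate freely on $[0,t-\eta]$ where the kernel is bounded, estimate the near-diagonal piece $[t-\eta,t]$ by the improved bound, and let $\eta\to0^+$; this validates the interchange and completes the proof. Everything else reduces to the properties of $\cE$ recorded in Theorem~\ref{thm-main} and the representation formulas of Theorem~\ref{prop1}.
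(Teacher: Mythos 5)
Your proposal is correct in outline and is, at bottom, the same convolution/Duhamel computation that the paper carries out, but you package it differently and you are more explicit about the one genuinely delicate point. The paper extends $f$ by odd reflection, uses commutativity of convolution to write $w_3(x,t)=\int_0^t\int_\bR\cE(y,s)\tilde f(x-y,t-s)\,dy\,ds$, puts the $t$-derivative on the compactly supported $C^1$ datum $\tilde f$, and then integrates by parts in $s$ to move it back onto $\cE$; the source term $f(x,t)$ emerges from the boundary term of that integration by parts via the approximate-identity property of $\cE(\cdot,s)$ as $s\to0^+$, which is exactly your Duhamel boundary term $\lim_{s\to t^-}W_3(x,t;s)=f(x,t)$. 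You instead freeze $s$, recognize each slice $W_j(\cdot,\cdot;s)$ as the solution of the homogeneous problem from Theorem \ref{prop1} with datum $f(\cdot,s)$, and integrate in $s$; this yields the boundary conditions and the equation on each slice for free. The added value of your write-up is the explicit treatment of the interchange of $\tfrac{\partial}{\partial x}D^\alpha_x$ (equivalently, of $\partial_t$ of the slice) with $\int_0^t ds$: the naive slice-wise bound is indeed of order $(t-s)^{-1}$ and not integrable, and your cancellation --- $\int_\bR\partial_\tau\cE(\cdot,\tau)\,dz=0$ combined with the Lipschitz continuity of $f$ in $y$ --- improves it to the integrable order $(t-s)^{-\alpha/(1+\alpha)}$; the first-moment bound on the profile that this requires can be extracted from the positivity, monotonicity and integrability of $\Phi$ together with Lemma \ref{le-zanik}, all of which the paper provides. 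The paper passes over this interchange with ``it is easy to check,'' so your $\eta$-truncation argument is a strengthening rather than a redundancy. The one place where you are as terse as the paper is the claimed $C^{1+\alpha}$ regularity: the H\"older-$\alpha$ seminorm of $\partial_x W_j(\cdot,t;s)$ scales like $(t-s)^{-(2+\alpha)/(1+\alpha)}$, which is likewise non-integrable without the same cancellation or without moving one derivative onto $f$, so that point deserves the same care you gave the equation itself.
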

\begin{proof} We will present an argument for $w_3$. The proof for $w_4$ goes along the same lines.

We extend $f$ by odd reflection, $\tilde f(-y,t) = - f(-y,t)$, for $y>0$. Then, $w_3$ takes the following form, where we use the commutativity of the convolution,
$$
w_3(x,t) = \int_{0}^t \int_{\bR} \cE(x-y, t-s) \tilde f(y,s) \,dyds=
 \int_{0}^t \int_{\bR} \cE(y,s) \tilde f (x-y, t-s)\,dyds.
$$
Due to the compactness of the support of $\tilde f$, we have
$$
\frac {\partial w_3}{\partial t} (x,t) =  \int_{\bR} \cE(y,s) 
\tilde f (x-y, 0)\,dy +
\int_{0}^t \int_{\bR} \cE(y,s) 
\frac {\partial \tilde f}{\partial t} (x-y, t-s)\,dyds.
$$
Then, it is easy to establish by carefully employing the integration by parts that
$$
\frac {\partial w_3}{\partial t} (x,t) =  \int_{\bR} \cE(y,s) 
\tilde f (x-y, 0)\,dy +
\int_{0}^t \int_{\bR} \frac {\partial}{\partial s}\cE(y,s) 
 \tilde f (x-y, t-s)\,dyds.
$$
We have already seen in the course of proof of Theorem \ref{prop1} that 
$$
D^\alpha_{x_C}w_3(x,t) = \int_t\int_\bR D^\alpha_{x_C} \cE(x-y, t-s) \tilde f(y,s) \,dyds.
$$
Subsequently, it is easy to check that
$$
\frac \partial{\partial x} D^\alpha_{x_C} w_3(x,t)
=\int_t\int_\bR \frac \partial{\partial y}D^\alpha_{y_C} \cE f(y,s) \tilde (x-y, t-s) \,dyds.
$$
Hence, $w_3$ is a solution to (\ref{rn33}).

An argument used in the proof of Theorem \ref{prop1} shows that $w_3$ satisfies the Dirichlet boundary condition. Now, we shall investigate the initial condition. Due to the boundedness of $f$ we see that
$$
|w_3(x,t)| \le \int_0^t\int_\bR \cE(y,s) \| f\|_{L^\infty}\,dyds \le t \| f\|_{L^\infty}\,dyds.
$$
Our claims follow.
\end{proof}


We constructed solutions to (\ref{rn1}) with the help of the convolution of the fundamental solution with the data. Since this case is not covered neither in \cite{NaRy} nor in \cite{KR1} we have to show uniqueness separately. The difficulty with the classical method of testing the equation with the solution is that 
integrability of the derivatives (fractional and integer) of  $\cE$ is different than one might expect.  Here, we present an observation which turns out very useful.
\begin{lemma}\label{le-zanik}
Let us suppose that $\Phi$ is given by (\ref{def-phi}). Then, 
for all $x>0$ we have,
$$
0< x \,\Phi (x)\le 2.
$$
\end{lemma}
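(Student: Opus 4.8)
The positivity $x\,\Phi(x)>0$ is immediate, since $x>0$ by assumption and $\Phi(x)>0$ by Lemma \ref{le-posit}; the entire content is the upper bound. My plan is to turn the fractional ODE \eqref{ode} into an integral identity from which $x\,\Phi(x)$ can be read off. Applying the operator $I^\alpha$ to the equation $D^\alpha_C\Phi(x)=-\frac{x}{1+\alpha}\Phi(x)$ supplied by Proposition \ref{rnieE}, and using \eqref{cal-roz} together with $\Phi(0)=1$, I obtain
$$
1-\Phi(x)=\frac{1}{(1+\alpha)\Gamma(\alpha)}\int_0^x (x-s)^{\alpha-1}\,s\,\Phi(s)\,ds .
$$
Positivity of $\Phi$ makes the left-hand side strictly less than $1$, which is the only upper bound on the integral I shall need.

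Next I would exploit monotonicity. Since $\Phi$ is decreasing by Lemma \ref{le-dec}, we have $\Phi(s)\ge\Phi(x)$ for $s\in(0,x)$, so the integrand is bounded below by $(x-s)^{\alpha-1}s\,\Phi(x)$. The remaining integral is elementary, $\int_0^x (x-s)^{\alpha-1}s\,ds=x^{1+\alpha}B(2,\alpha)=\frac{x^{1+\alpha}}{\alpha(1+\alpha)}$. Feeding this lower bound back, using $\alpha\Gamma(\alpha)=\Gamma(1+\alpha)$, and invoking $1-\Phi(x)<1$ yields the key estimate
$$
x^{1+\alpha}\Phi(x)<(1+\alpha)^2\,\Gamma(1+\alpha),\qquad x>0,
$$
equivalently the decay $x\,\Phi(x)<(1+\alpha)^2\Gamma(1+\alpha)\,x^{-\alpha}$, which is useful for large $x$.

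To finish I would combine this with the trivial bound $x\,\Phi(x)\le x$ coming from $\Phi\le 1$, see \eqref{estE}, and split at $x=2$: for $0<x\le 2$ the trivial bound gives $x\,\Phi(x)\le x\le 2$, while for $x>2$ the decay estimate gives $x\,\Phi(x)<(1+\alpha)^2\Gamma(1+\alpha)\,2^{-\alpha}$. The argument then reduces entirely to the purely numerical inequality $(1+\alpha)^2\Gamma(1+\alpha)\le 2^{1+\alpha}$ for $\alpha\in(0,1]$; this is exactly where the clean constant $2$ originates and is the only delicate point, which I expect to be the main obstacle. I would dispatch it in two elementary steps: first $\Gamma(1+\alpha)\le 1$, since $\Gamma$ is convex on the argument interval $[1,2]$ (a consequence of its log-convexity) with both endpoint values equal to $1$; and second $(1+\alpha)^2\le 2^{1+\alpha}$, which after taking logarithms reads $\frac{\ln(1+\alpha)}{1+\alpha}\le\frac{\ln 2}{2}$ and holds because $t\mapsto\frac{\ln t}{t}$ is increasing on $(0,e)\supset(1,2]$ and $1+\alpha\le 2$. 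Multiplying the two bounds closes the case $x>2$ and completes the proof.
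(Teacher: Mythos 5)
Your proof is correct, and it follows the same overall strategy as the paper's --- apply $I^\alpha$ to \eqref{ode}, use \eqref{cal-roz}, positivity (Lemma \ref{le-posit}), monotonicity (Lemma \ref{le-dec}) and the bound $\Phi\le 1$ from \eqref{estE}, then split at $x=2$ --- but the key estimate is extracted differently. The paper localizes the integral to the subinterval $[x-1,x]$ (for $x>1$), bounds $z\ge x-1$ there, and obtains $(x-1)\Phi(x)\le\Gamma(1+\alpha)$, i.e.\ a decay of order $x^{-1}$; closing the argument then only needs $\Gamma(1+\alpha)\le 1$ and $x/(x-1)\le 2$ for $x\ge 2$. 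You instead keep the full interval $[0,x]$, evaluate the Beta integral exactly, and obtain $x^{1+\alpha}\Phi(x)<(1+\alpha)^2\Gamma(1+\alpha)$. This is strictly stronger: it gives the decay $\Phi(x)=O(x^{-1-\alpha})$, which in particular makes $\Phi$ integrable at infinity and would yield the integrability statement of Lemma \ref{le-int} as an immediate corollary, with a constant that does not degenerate as $\alpha\to 0$ or $\alpha\to 1$. The price is the numerical inequality $(1+\alpha)^2\Gamma(1+\alpha)\le 2^{1+\alpha}$, which your two-step argument (convexity of $\Gamma$ on $[1,2]$ with equal endpoint values, and monotonicity of $t\mapsto(\ln t)/t$ on $(0,e)$) settles cleanly, with equality exactly at $\alpha=1$. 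A minor point in your favour: your integrated identity correctly carries the factor $\frac{1}{1+\alpha}$ coming from \eqref{ode}, which the paper's displayed identity omits.
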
\noindent
{\it Proof.}
We combine (\ref{ode}) and (\ref{r-oszac}) to obtain
$$
\Phi(0) - \Phi(x) = \frac 1{\Gamma(\alpha)} \int_0^x \frac{z \Phi(z)\, dz}{(x-z)^{1-\alpha}}.
$$
Taking into account the positivity of $\Phi$, we obtain for $x>1$ that
$$
1 = \Phi(0)  \ge \frac 1{\Gamma(\alpha)} \int_{x-1}^x \frac{z \Phi(z)\, dz}{(x-z)^{1-\alpha}}.
$$
Since Theorem \ref{thm-main} (3) guarantees monotonicity of $\Phi$, then we obtain the estimate
$$
\Gamma(\alpha)\ge (x-1)\Phi(x) \int_{x-1}^x \frac{dz}{(x-z)^{1-\alpha}}
= \frac 1\alpha (x-1)\Phi(x).
$$
Hence, for $x\in [0,2]$ we have $x\Phi (x) \le 2$. For $x>2$ we obtain
$$
x\Phi(x) \le \Gamma(1+\alpha) \frac x{x-1} \le 2. \eqno\qed
$$

We will use the above Lemma to show limited integrability of the derivatives of solutions constructed in Theorem \ref{prop1}.
\begin{lemma}\label{le-pi}
Let us suppose that $g\in W^{1,1}(0,\infty)$, $g(0)=0$ and $w_1$ is given by (\ref{rn3.5}) and  $w_2$ is given by (\ref{rn3.6}). We also assume that $g$ in the definition of $w_1$ satisfies $g(0) =0$. Then,\\
(1) $\frac \partial {\partial t} w_i (\cdot, t) \in L^1(0,\infty)$ for all $t>0$ and $i=1,2$;\\
(2) $D^\alpha_{x_C} w_i (\cdot, t) \in L^\infty(0,\infty)$ for all $t>0$ and $i=1,2$;\\
(3) $\frac \partial {\partial x} w_i (\cdot, t) \in L^1(0,\infty)$ for all $t>0$ and $i=1,2$.
\end{lemma}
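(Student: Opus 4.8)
The proof rests on the convolution representations \eqref{r-rep}, namely $w_1(\cdot,t)=\cE_t*\tilde g$ and $w_2(\cdot,t)=\cE_t*\bar g$ with $\tilde g$ (resp. $\bar g$) the odd (resp. even) extension of $g$ to $\bR$, together with Young's inequality for convolutions and three properties of the kernel. Because $g\in W^{1,1}(0,\infty)$ with $g(0)=0$, both extensions belong to $W^{1,1}(\bR)$ -- the hypothesis $g(0)=0$ is precisely what guarantees that the odd extension is absolutely continuous across the origin -- and one has $\|\tilde g\|_{L^1(\bR)}=\|\bar g\|_{L^1(\bR)}=2\|g\|_{L^1(0,\infty)}$ together with $\|\tilde g'\|_{L^1(\bR)}=\|\bar g'\|_{L^1(\bR)}=2\|g'\|_{L^1(0,\infty)}$. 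Throughout I write $\mu=\tfrac1{1+\alpha}$. I begin with (3), which is simplest: since the relevant extension lies in $W^{1,1}(\bR)$ and $\cE_t\in L^1(\bR)$, the spatial derivative may be transferred onto the data, $\partial_x w_i(\cdot,t)=\cE_t*(g')^{\mathrm{ext}}$, so Young's inequality and $\|\cE_t\|_{L^1(\bR)}=1$ give $\|\partial_x w_i(\cdot,t)\|_{L^1}\le 2\|g'\|_{L^1(0,\infty)}<\infty$; restricting to $\bR_+$ yields (3).

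For (2) I use the fractional ODE and Lemma \ref{le-zanik}. Proposition \ref{rnieE} gives $D^\alpha_C\Phi(y)=-\mu\,y\Phi(y)$, so Lemma \ref{le-zanik} yields the uniform bound $|D^\alpha_C\Phi(y)|\le 2\mu$ for all $y>0$. By the elementary scaling rule $D^\alpha_C[\phi(\lambda\,\cdot)](x)=\lambda^\alpha(D^\alpha_C\phi)(\lambda x)$ applied with $\lambda=t^{-\mu}$ (the same scaling underlying Corollary \ref{co-ald}), and using $\mu(1+\alpha)=1$, I obtain for $x>0$ that $D^\alpha_{x_C}\cE_t(x)=a_0\,t^{-1}(D^\alpha_C\Phi)(x t^{-\mu})$, whence $\|D^\alpha_{x_C}\cE_t\|_{L^\infty}\le 2a_0\mu/t$. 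Inserting this uniform bound into the representation of $D^\alpha_x w_i$ obtained in the proof of Theorem \ref{prop1} (an integral of $D^\alpha\cE_t$ against the extended data) and estimating crudely by $\|D^\alpha_{x_C}\cE_t\|_{L^\infty}\|g^{\mathrm{ext}}\|_{L^1}$ gives $\|D^\alpha_{x_C}w_i(\cdot,t)\|_{L^\infty}\le 4a_0\mu\,t^{-1}\|g\|_{L^1}<\infty$, which is (2).

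The decisive point is (1), for which it suffices to show $\partial_t\cE_t\in L^1(\bR)$; then $\partial_t w_i(\cdot,t)=(\partial_t\cE_t)*g^{\mathrm{ext}}$ and Young's inequality finish the job. Differentiating the self-similar form \eqref{defE} one finds the identity $\partial_t\cE=-\mu\,t^{-1}\partial_x(x\cE)$, so that $\int_0^\infty|\partial_t\cE(x,t)|\,dx=\mu t^{-1}\int_0^\infty|\partial_x(x\cE)|\,dx\le\mu t^{-1}\bigl(\int_0^\infty\cE\,dx+\int_0^\infty x\,|\partial_x\cE|\,dx\bigr)$. The first integral equals $\tfrac12$ by Theorem \ref{thm-main}(4). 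For the second, Theorem \ref{thm-main}(3) gives $\partial_x\cE\le 0$, so integration by parts yields $\int_0^\infty x(-\partial_x\cE)\,dx=\bigl[-x\cE\bigr]_0^\infty+\int_0^\infty\cE\,dx$. The boundary terms vanish: indeed $\cE(\cdot,t)$ is nonnegative, decreasing and integrable, and any such $f$ satisfies $\tfrac x2 f(x)\le\int_{x/2}^x f\to0$, so $x\cE(x,t)\to0$ as $x\to\infty$. Hence $\int_0^\infty x(-\partial_x\cE)\,dx=\tfrac12$, $\|\partial_t\cE_t\|_{L^1(\bR)}\le 2\mu/t$, and $\|\partial_t w_i(\cdot,t)\|_{L^1}\le 4\mu t^{-1}\|g\|_{L^1}<\infty$, giving (1).

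The main obstacle is precisely the control of $\int_0^\infty|\partial_x(x\cE)|\,dx$, i.e. the total variation of $x\cE$, in part (1): the pointwise estimate $x\Phi(x)\le2$ of Lemma \ref{le-zanik} alone does not bound this total variation, and what is actually needed is the decay $x\cE(x,t)\to0$, which I extract from monotonicity and integrability rather than from the pointwise bound. The remaining technical points -- justifying differentiation under the integral sign in $\partial_t w_i=(\partial_t\cE_t)*g^{\mathrm{ext}}$ and the commutation of $D^\alpha_{x_C}$ with the convolution in (2) -- are routine consequences of the compact support of $g$ together with the smoothness and integrability of $\cE_t$ established earlier, the latter having already been carried out in the proof of Theorem \ref{prop1}.
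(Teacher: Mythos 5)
Your proposal is correct, and for parts (2) and (3) it follows essentially the paper's own route: (2) via the scaling identity $D^\alpha_{x_C}f(\lambda\,\cdot)(x)=\lambda^\alpha (D^\alpha f)(\lambda x)$ combined with the ODE of Proposition \ref{rnieE} and the bound $y\Phi(y)\le 2$ of Lemma \ref{le-zanik}, and (3) by transferring the spatial derivative onto the extended datum and applying Young's inequality (with $g(0)=0$ ensuring the odd extension lies in $W^{1,1}(\bR)$). The genuine divergence is in part (1). The paper differentiates the kernel in $t$, obtains a term involving $\Phi'$, and integrates that term by parts in $y$ so as to land on $\cE_t * g'$; thus the paper's argument for (1) genuinely uses $g'\in L^1$. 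You instead prove the stronger kernel estimate $\partial_t\cE(\cdot,t)\in L^1(\bR_+)$ directly, via the identity $\partial_t\cE=-\tfrac1{(1+\alpha)t}\,\partial_x(x\cE)$ (which is consistent with the equation, since $D^\alpha_{x_C}\cE=-\tfrac{x}{(1+\alpha)t}\cE$), controlling the total variation of $x\mapsto x\cE(x,t)$ through the monotonicity and integrability of $\cE$ from Theorem \ref{thm-main}; Young's inequality then needs only $g\in L^1$ with compact support. This buys a cleaner statement (part (1) does not actually require $g\in W^{1,1}$) and isolates a fact of independent interest, at the cost of the extra observation that $x\cE(x,t)\to 0$ as $x\to\infty$ — which you justify correctly, and which is in any case not needed for the upper bound since the boundary term $-R\cE(R,t)$ is nonpositive. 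Two small caveats: like the paper, you implicitly use the compact support of $g$ (the lemma inherits it from Theorem \ref{prop1}) to justify differentiating under the integral, and in (2) you inherit from the proof of Theorem \ref{prop1} the commutation of $D^\alpha_{x_C}$ with the convolution; both points are no worse than in the paper's own treatment.
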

\begin{proof}
We will use the representation formulas (\ref{r-rep}). The argument is conducted simultaneously for $\tilde g$ and $\bar g$. For the sake of simplicity of notation we will write here $g$ for both $\tilde g$ and $\bar g$ and also $w$ will denote $w_1$ and $w_2$.

We will check that (1) holds. 
Let us compute $\frac \partial {\partial t} w$, we will express it in term of $\Phi$ introduced in (\ref{def-phi}),
$$
\frac \partial {\partial t} w(x,t) = - \frac{a_0}{(1+\alpha) t^{1+ \frac 1{1+\alpha}}}
\int_{-\infty}^\infty \Phi \left( \frac{x-y}{t^{\frac1{1+\alpha}}}\right) g(y)\,dy
- \frac{a_0}{(1+\alpha) t^{1+\frac 2{1+\alpha}}}
\int_{-\infty}^\infty \frac{d\Phi }{d\xi} \left( \frac{x-y}{t^{\frac1{1+\alpha}}}\right) g(y)\,dy.
$$
The RHS above is well-defined because $g$ has a compact support and $\Phi$ is a $C^{1+\alpha}$-function. Since $g$ belongs to $W^{1,1}(\bR_+)$ we may integrate the last term by parts. Here, in the case of the odd extension of $g$ we use $g(0)=0$.

Finally, we reach
$$
\frac \partial {\partial t} w(x,t) = - \frac 1{(1+\alpha)t}
\int_0^\infty \cE_t(x-y)g(y)\,dy
- \frac1{(1+\alpha) t^{1+\frac 1{1+\alpha}}}
\int_0^\infty \cE_t(x-y) g'(y)\,dy.
$$
The integrability of $\cE$, $g$ and $g'$ implies claim (1) for $w$.


We are going to establish part (2). We have already seen that $D^\alpha_{x_C}$ commute with integration, so we have,
$$
D^\alpha_{x_C} w = \int_{-\infty}^\infty D^\alpha_{x_C} \cE_t(x-y) g(y)\, dy.
$$
Since $\cE_t(x) = \frac{a_0}{t^{\frac1{1+\alpha}}} 
\Phi(x t^{-\frac1{1+\alpha}})$, we have to calculate the Caputo derivative of a scaled function. Let us suppose that $f$ is absolutely continuous on $[0,\infty$). We set $f_\lambda(x) = f(\lambda x)$. We compute $D^\alpha_{x_C}f_\lambda$,
$$
D^\alpha_{x_C}f_\lambda(x) = \frac1{\Gamma(1-\alpha)}\int_0^x \frac{df_\lambda}{ds}(s)(x-s)^{-\alpha}\,ds
= \frac1{\Gamma(1-\alpha)}\int_0^x \lambda\frac{f'(\lambda s)}{(x-s)^\alpha}\,ds.
$$
After changing the variables $\lambda s = z$ we obtain,
\begin{equation}\label{r-scale}
D^\alpha_{x_C}f_\lambda(x) = 
\frac{\lambda^\alpha}{\Gamma(1-\alpha)} \int_0^{\lambda x} \frac{ f'(z)}{(\lambda x - z)^\alpha}\, dz
= \lambda^\alpha D^\alpha_{y_C}f(\lambda x).
\end{equation}
Taking  into account(\ref{r-scale})  yields,
$$
D^\alpha_{x_C} w =   \frac{a_0 }{t^{\frac 2{1+\alpha}}} \int_{-\infty}^\infty D^\alpha_{\xi_C} \Phi((x-y)t^{-\frac 1{1+\alpha}}) g(y)\, dy.
$$
Now, due to  (\ref{ode}) and Lemma \ref{le-zanik} we conclude that
$$
|D^\alpha_{x_C} v_2| = \left| a_0 t^{ - \frac1{1+\alpha}}
\int_{-\infty}^\infty \frac{(x - y)}{t^{\frac1{1+\alpha}}}  
\Phi\left(\frac{(x - y)}{t^{\frac1{1+\alpha}}}\right) g(y)\, dy \right|
\le a_0 2 t^{ - \frac1{1+\alpha}}.
$$
Part (2) follows.

Part (3) is established along the lines of the proof of (1). 
Let us compute the derivative of $w$, then we see
$$
\frac \partial {\partial x} w(x,t) =
\int_{-\infty}^\infty  \frac \partial {\partial x} \cE_t(x+y) g(y)\, dy
= - \int_{-\infty}^\infty  \cE_t(x+y) g'(y)\, dy,
$$
where we used  
the boundedness of the support of $g$.
\end{proof}

We would like to state our uniqueness result. For this purpose we define a class of functions, which we find suitable,
\begin{align*}
 X =& \,C([0,\infty)^2)\cap C^{1+\alpha}([0,\infty)\times(0,\infty))
\cap\\
&\{ u\in L^\infty(\bR_+; L^2(\bR_+)):\ \forall t>0\ u_t(\cdot, t), u_x(\cdot, t)\in L^1(\bR_+), 
\ D^\alpha_{x_c}u(\cdot, t) \in L^\infty(\bR_+)\}. 
\end{align*}

\begin{proposition}\label{pr-uq}
If $w$ is a solution to (\ref{rn3}) with either Dirichlet or Neumann boundary data (\ref{r12.5}) and $w\in X$, then $w$ is unique.
\end{proposition}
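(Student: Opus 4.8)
The plan is to argue by linearity together with an energy (testing) method, exploiting exactly the integrability recorded in Lemma \ref{le-pi}. Since the class $X$ is a linear space, if $w^{(1)}, w^{(2)}\in X$ both solve (\ref{rn3}) with the same datum $g$ and the same boundary condition (Dirichlet or Neumann), then $w=w^{(1)}-w^{(2)}\in X$ solves the homogeneous equation $w_t=\partial_x D^\alpha_{x_C}w$ with $w(x,0)=0$ and the corresponding homogeneous boundary condition. It therefore suffices to show $w\equiv0$. The properties built into $X$, namely $w(\cdot,t)\in L^2(\bR_+)$, $w_x(\cdot,t)\in L^1(\bR_+)$, $D^\alpha_{x_C}w(\cdot,t)\in L^\infty(\bR_+)$ and $w\in C^{1+\alpha}$, are precisely what make the manipulations below legitimate.

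First I would multiply the equation by $w$ and integrate over $x\in(0,\infty)$. The time term gives $\int_0^\infty w\,w_t\,dx=\tfrac12\frac{d}{dt}\|w(\cdot,t)\|_{L^2}^2$, while for the spatial term I integrate by parts,
\[
\int_0^\infty w\,\partial_x D^\alpha_{x_C}w\,dx=\bigl[w\,D^\alpha_{x_C}w\bigr]_{x=0}^{x=\infty}-\int_0^\infty w_x\,D^\alpha_{x_C}w\,dx.
\]
The boundary terms vanish in both the Dirichlet and the Neumann case: at $x=0$ one has $D^\alpha_{x_C}w(0,t)=0$, because the Caputo derivative of a function with bounded derivative vanishes at the left endpoint (the integrand in its definition is controlled by $x^{1-\alpha}$), so that $w(0,t)D^\alpha_{x_C}w(0,t)=0$ regardless of whether $w(0,t)=0$; and at $x=\infty$ one uses that $w_x(\cdot,t)\in L^1(\bR_+)$ makes $w(x,t)$ converge as $x\to\infty$, while $w(\cdot,t)\in L^2(\bR_+)$ forces this limit to be $0$, with $D^\alpha_{x_C}w(\cdot,t)\in L^\infty(\bR_+)$ bounded. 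Thus I obtain the single identity $\tfrac12\frac{d}{dt}\|w(\cdot,t)\|_{L^2}^2=-\int_0^\infty w_x\,D^\alpha_{x_C}w\,dx$, valid for both boundary conditions at once.

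The heart of the matter, and the step I expect to be the main obstacle, is to show that the remaining quadratic form has a sign, $\int_0^\infty w_x\,D^\alpha_{x_C}w\,dx\ge0$. Here I would use that, directly from the definition of the Caputo derivative, $D^\alpha_{x_C}w=I^{1-\alpha}w_x$ with $I^{1-\alpha}$ as in (\ref{frak1}), so the form equals $\int_0^\infty w_x\,I^{1-\alpha}w_x\,dx$, the bilinear form of the Riemann--Liouville integral operator of order $1-\alpha$. This operator is nonnegative: its kernel $x^{-\alpha}/\Gamma(1-\alpha)$ is nonnegative and nonincreasing, which is exactly the hypothesis of the classical positivity result for convolution Volterra forms, giving $\int_0^T w_x\,I^{1-\alpha}w_x\,dx\ge0$ on every finite interval $(0,T)$ (note $D^\alpha_{x_C}w(x)$ depends on $w_x$ only over $[0,x]$, so these truncated integrals are genuinely the restricted forms); letting $T\to\infty$ is harmless since $w_x\,D^\alpha_{x_C}w\in L^1(\bR_+)$. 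Equivalently, extending $w_x$ by zero to $\bR$ and using Plancherel, the form equals $\tfrac{1}{2\pi}\cos\!\bigl(\tfrac{(1-\alpha)\pi}{2}\bigr)\int_\bR|\widehat{w_x}(\xi)|^2|\xi|^{-(1-\alpha)}\,d\xi\ge0$, because $\cos\!\bigl(\tfrac{(1-\alpha)\pi}{2}\bigr)=\sin\!\bigl(\tfrac{\alpha\pi}{2}\bigr)>0$ for $\alpha\in(0,1)$. The only care required is the routine passage from finite intervals (or from the $L^2$ Fourier computation) to the whole half-line.

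Combining the two displays yields $\frac{d}{dt}\|w(\cdot,t)\|_{L^2}^2\le0$, so $t\mapsto\|w(\cdot,t)\|_{L^2}^2$ is nonincreasing on $(0,\infty)$. Finally I would bring in the initial condition: since $w\in C([0,\infty)^2)$ with $w(\cdot,0)\equiv0$ and $w$ is uniformly bounded in $L^2(\bR_+)$, the $L^2$-attainment of the (here vanishing) datum gives $\|w(\cdot,t)\|_{L^2}\to0$ as $t\to0^+$; monotonicity then forces $\|w(\cdot,t)\|_{L^2}\equiv0$, hence $w\equiv0$, which is the asserted uniqueness. Retaining the same energy identity for genuinely nonzero data, the dissipation term $\int_0^\infty w_x\,I^{1-\alpha}w_x\,dx\ge0$ simultaneously delivers the announced monotone decay of $\|w(\cdot,t)\|_{L^2}$.
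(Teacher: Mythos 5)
Your proposal is correct and follows essentially the same route as the paper: test the difference of two solutions with itself, integrate by parts (the boundary terms vanish for the same reasons, membership in $X$ supplying the needed integrability), and use the nonnegativity of $\int_0^\infty w_x\, I^{1-\alpha}w_x\,dx$ to conclude that $\|w(\cdot,t)\|_{L^2}^2$ is nonincreasing from its zero initial value. The only noteworthy difference is that you justify the positivity of the fractional bilinear form yourself (via the positive-definiteness of the convolution kernel $x^{-\alpha}/\Gamma(1-\alpha)$, or Plancherel), whereas the paper simply cites \cite[Proposition 6.10]{KuYa} for this step, and you treat the contribution at $x=\infty$ by a direct limit where the paper extracts a sequence $R_n\to\infty$ along which $w\,D^\alpha_{x_C}w\to0$.
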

\begin{proof} 
We take the difference $w$ of two solutions $u_1$ and $u_2$ from $X$, we multiply them by $w$ and integrate over $\bR_+$. The definition of the class $X$ permits us to write
$$
\frac12 \frac d{dt} \|w(t)\|^2_{L^2} =\langle w_t, w \rangle \equiv \int_0^\infty \frac\partial{\partial x} D^\alpha_{x_C}w(x,t) w(x,t)\,dx. 
$$
Since $w(\cdot,t)$ is continuous on $\bR_+$ as well as it is in $L^2$, then there exists a sequence $R_n$ converging to infinity, such that $w(R_n,t) D^\alpha w(R_n,t) $ goes to zero, when $n\to \infty.$ Thus, after integration by parts over $[0,R_n]$ the RHS above takes the following form,
$$
\int_0^\infty \frac\partial{\partial x} D^\alpha_{x_C}w(x,t) w(x,t)\,dx
= \lim_{n\to\infty} 
\int_0^{R_n} \frac\partial{\partial x} D^\alpha_{x_C}w(x,t) w(x,t)\,dx =
 -\lim_{n\to\infty} 
\int_0^{R_n}  D^\alpha_{x_C}w(x,t)\frac\partial{\partial x} w(x,t)\,dx,
$$
where we also take into account the zero Dirichlet data at $x=0$ or vanishing $D^\alpha w(0)$. In order to estimate the RHS we recall that \cite[Proposition 6.10]{KuYa} implies that
$$
\int_0^{R_n}  D^\alpha_{x_C}w(x,t)\frac\partial{\partial x} w(x,t)\,dx \ge 0.
$$
See formula (5) in the proof of \cite[Theorem 1]{KR1} for more details. As a result, we conclude that
$$
\frac d{dt} \|w(t)\|^2_{L^2} \le 0.
$$
Hence, for all $t>0$ we have $\|w(t)\|_{L^2}^2 \le \|w(0)\|_{L^2}^2 =0$.
\end{proof}

It is interesting to check when a solution belongs to  class $X.$ We do not offer a full answer, however, Lemma \ref{le-pi} gives us a hint.
We note :
\begin{corollary}
Let us suppose that $f$ is in $C_c(\bR_+^2)$. Then,\\
(1) If  $g$ is in $W^{1,1}(\bR_+)$ with bounded support and $g(0) =0$, then $w_1 + w_3$ is a unique solution to (\ref{rn33}) with initial condition (\ref{rn3}${}_2$) and boundary condition (\ref{r12.5}${}_1$).\\
(2) If  $g$ is in $W^{1,1}(\bR_+)$ with bounded support, then $w_2 + w_4$ is a unique solution to (\ref{rn33}) with initial condition (\ref{rn3}${}_2$) and boundary condition (\ref{r12.5}${}_2$).
\end{corollary}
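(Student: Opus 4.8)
The plan is to read the Corollary as a superposition-plus-uniqueness statement. Existence will be immediate from linearity, and the real content is uniqueness, which I obtain from Proposition~\ref{pr-uq} once I have checked that the candidate solution belongs to the class $X$. I give the argument for case~(1); case~(2) is identical after replacing $w_1,w_3$ by $w_2,w_4$, the odd extension by the even one, and the Dirichlet condition by the Neumann one---note that for the even extension Lemma~\ref{le-pi} does not require $g(0)=0$, which is why that hypothesis is absent in case~(2). For existence, Theorem~\ref{prop1} gives $w_1$ solving \eqref{rn3} with datum $g$ and the Dirichlet condition \eqref{r12.5}${}_1$, while the Corollary constructing $w_3,w_4$ gives $w_3$ solving \eqref{rn33} with forcing $f$, zero initial datum, and the Dirichlet condition. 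Since $\partial_t-\partial_x D^\alpha$ and the boundary operators are linear, $w_1+w_3$ solves \eqref{rn33} with forcing $f$, initial condition $w(\cdot,0)=g$ and the Dirichlet condition, which is exactly the stated problem.

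For uniqueness I use that $X$ is a linear subspace, being an intersection of linear function classes. Thus the difference $w=u_1-u_2$ of two solutions from $X$ again lies in $X$, solves the homogeneous equation \eqref{rn3} with zero initial datum, and inherits the homogeneous boundary condition $w(0,t)=0$ (resp. $w_x(0,t)=0$). The energy computation of Proposition~\ref{pr-uq} then applies unchanged and forces $w\equiv0$. Hence it suffices to verify that the constructed solution lies in $X$, and by linearity that each of $w_1$ and $w_3$ does.

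Membership $w_1\in X$ follows by collecting earlier results. Continuity, the $C^{1+\alpha}$-regularity away from $t=0$, and the bound $w_1\in L^\infty(\bR_+;L^2(\bR_+))$ come from Theorem~\ref{prop1}; here I use that a $W^{1,1}(\bR_+)$ function with bounded support lies in $L^2$, so the case $p=2$ applies. The three integrability requirements defining $X$, namely $\partial_t w_1(\cdot,t),\partial_x w_1(\cdot,t)\in L^1$ and $D^\alpha_{x_C}w_1(\cdot,t)\in L^\infty$ for every $t>0$, are exactly the conclusions of Lemma~\ref{le-pi}. For $w_3$ the $C^{1+\alpha}$-regularity and continuity are supplied by the Corollary constructing it, so only the integrability and $L^2$ conditions remain, and these I would obtain from the Duhamel representation $w_3(x,t)=\int_0^t(\cE_\tau*\tilde f(\cdot,t-\tau))(x)\,d\tau$ (convolution in $x$, with $\tilde f$ the odd extension of $f$) together with the normalization $\|\cE_\tau\|_{L^1(\bR)}=1$ from Theorem~\ref{thm-main}(4). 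Young's inequality gives $\|w_3(\cdot,t)\|_{L^2}\le\int_0^t\|\tilde f(\cdot,t-\tau)\|_{L^2}\,d\tau$, which is bounded uniformly in $t$ because $\supp f$ is compact in time, so $w_3\in L^\infty(\bR_+;L^2(\bR_+))$; transferring the spatial derivative onto $\tilde f$ gives likewise $\|\partial_x w_3(\cdot,t)\|_{L^1}\le\int_0^t\|\partial_x\tilde f(\cdot,t-\tau)\|_{L^1}\,d\tau<\infty$. The bound $D^\alpha_{x_C}w_3(\cdot,t)\in L^\infty$ follows as in Lemma~\ref{le-pi}(2): the scaling identity \eqref{r-scale}, the equation \eqref{ode}, and the decay estimate $x\,\Phi(x)\le2$ of Lemma~\ref{le-zanik} bound $|D^\alpha_{x_C}(\cE_\tau*\tilde f(\cdot,s))|$ by a constant times $\tau^{-1/(1+\alpha)}$, and the surviving integral $\int_0^t(t-s)^{-1/(1+\alpha)}\,ds$ converges because $1/(1+\alpha)<1$.

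I expect the one genuinely delicate point to be $\partial_t w_3(\cdot,t)\in L^1$: differentiating the Duhamel integral while leaving the time dependence on $\cE$ produces a factor $(t-s)^{-1}$ that is not integrable at $s=t$. I would circumvent this by differentiating the representation in the form carrying the time variable on $f$, giving $\partial_t w_3(x,t)=(\cE_t*\tilde f(\cdot,0))(x)+\int_0^t(\cE_\tau*\partial_t\tilde f(\cdot,t-\tau))(x)\,d\tau$. Because $\supp f$ is a compact subset of $(0,\infty)^2$ we have $f(\cdot,0)=0$, so the boundary term vanishes, and Young's inequality with $\|\cE_\tau\|_{L^1}=1$ bounds the $L^1(x)$-norm by $\int_0^t\|\partial_t\tilde f(\cdot,t-\tau)\|_{L^1}\,d\tau<\infty$. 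This step uses $f\in C^1$, which I read into the hypothesis $f\in C_c(\bR_+^2)$ exactly as in the Corollary constructing $w_3$. With all four conditions established, $w_3\in X$, hence $w_1+w_3\in X$, and Proposition~\ref{pr-uq} delivers uniqueness.
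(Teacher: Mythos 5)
Your proposal is correct and follows essentially the same route as the paper: verify via Lemma \ref{le-pi} that $w_1$ (resp. $w_2$) lies in $X$, use the computations behind the construction of $w_3$, $w_4$ to place those in $X$ as well, and then invoke Proposition \ref{pr-uq}. The paper's own proof is a three-line sketch of exactly this, so your version simply supplies the details (linearity of $X$, the Duhamel/Young estimates for $w_3$, and the handling of $\partial_t w_3$) that the authors leave implicit.
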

\begin{proof}
Lemma \ref{le-pi} shows that indeed $w_1$ and $w_2$ are in class $X$. The calculations we performed in the course of  proof of Theorem \ref{prop1} show that  $w_3$, $w_4$ also belong to $X$. Then, we use Proposition \ref{pr-uq} to finish the proof.
\end{proof}



Having established an integral representation of solution we may draw conclusions about their asymptotic behavior. Here we note a decay property.

\begin{proposition}
Let us suppose that the assumptions of the uniqueness theorem, Proposition \ref{pr-uq}, hold and $g\in W^{1,1}(\bR_+)$ has bounded support. If $u$ is a unique solution to (\ref{rn3}) corresponding solution to $g$, then
$$
\sup_{x\in\bR_+} |u(x, t)| \le C t^{-1/(1+\alpha)} \|g\|_{L^1}
$$
\end{proposition}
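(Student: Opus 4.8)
The plan is to reduce the statement to an $L^\infty$ bound on the kernel $\cE_t$ combined with a Young-type estimate. By Proposition \ref{pr-uq} together with the construction in Theorem \ref{prop1}, the unique solution $u$ corresponding to $g$ coincides with $w_1$ in the Dirichlet case (resp. $w_2$ in the Neumann case), hence it is given by the convolution representation (\ref{r-rep}),
$$
u(x,t) = \int_{\bR} \cE(x-y,t)\, h(y)\,dy,
$$
where $h$ denotes the odd extension $\tilde g$ or the even extension $\bar g$ of $g$. In either case $\|h\|_{L^1(\bR)} = 2\|g\|_{L^1(\bR_+)}$, since the extension only reflects $g$ across the origin.

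First I would estimate pointwise, uniformly in $x$,
$$
|u(x,t)| \le \int_{\bR} |\cE(x-y,t)|\,|h(y)|\,dy \le \|\cE_t\|_{L^\infty(\bR)}\,\|h\|_{L^1(\bR)},
$$
so that $\sup_{x\in\bR_+}|u(x,t)| \le 2\,\|\cE_t\|_{L^\infty(\bR)}\,\|g\|_{L^1}$. Next I would compute the sup-norm of the kernel from its self-similar form $\cE(x,t) = a_0\, t^{-1/(1+\alpha)}\,\Phi\bigl(|x|\,t^{-1/(1+\alpha)}\bigr)$. Since $\Phi$ is positive (Lemma \ref{le-posit}), satisfies $\Phi(x) < \Phi(0) = 1$ for $x>0$ by (\ref{estE}), and is decreasing by part (3) of Theorem \ref{thm-main}, its supremum over $\bR$ equals $\Phi(0) = 1$ and is attained at the origin. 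Consequently
$$
\|\cE_t\|_{L^\infty(\bR)} = a_0\, t^{-1/(1+\alpha)} \sup_{\xi\in\bR}\Phi(|\xi|) = a_0\, t^{-1/(1+\alpha)}.
$$
Combining the two displays with $C = 2a_0$ yields the claimed bound.

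The argument is short, and there is no genuine obstacle: the only substantive ingredient is the decay rate $t^{-1/(1+\alpha)}$ of the kernel's sup-norm, which is dictated entirely by the self-similar scaling of $\cE$ and by the boundedness and monotonicity of $\Phi$ already established in Theorem \ref{thm-main} and the supporting lemmas. The single conceptual point is to recognize that the spatial maximum of $\cE(\cdot,t)$ sits at $x=0$, so that the $L^1 \to L^\infty$ smoothing of the convolution carries exactly the self-similar rate.
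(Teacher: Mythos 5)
Your argument is correct and is exactly the paper's (one-line) proof, spelled out: the paper also just invokes the representation formula (\ref{r-rep}) together with the boundedness $\|\cE_t\|_{L^\infty}=a_0t^{-1/(1+\alpha)}$ coming from $\Phi\le\Phi(0)=1$. The only detail you add beyond the paper is the explicit constant $C=2a_0$ from the norm of the reflected extension, which is a fine refinement.
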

\begin{proof}
We use the representation formula and the boundedness of $\cE$.
\end{proof}

We proved in \cite{NaRy} that viscosity solutions depend continuously upon $\alpha\in [0,1].$ We can use the representation formula to establish the same result. Indeed, we can show:
\begin{proposition}
If $w^\alpha_i$ , $i=1,2$ then 
$$
\lim_{\alpha \to 1^-} w^\alpha_i = w^0_i, \qquad i =1,2,
$$
where $w^0_i$ is the solution to the heat equation.
\end{proposition}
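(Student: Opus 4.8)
The plan is to reduce the claim to an $L^1$ convergence of the fundamental solutions and then carry it through the convolution formulas \eqref{r-rep}. I fix $t>0$ and aim to show $w_i^\alpha(\cdot,t)\to w_i^0(\cdot,t)$ in $L^1(\bR_+)$ as $\alpha\to1^-$, for $i=1,2$, where $\cE^0$ denotes the kernel at $\alpha=1$. First I would identify this limit: at $\alpha=1$ the factors in \eqref{defb} give $E_{1,2,1}(z)=\sum_{n\ge0}(z/2)^n/n!=e^{z/2}$, so that \eqref{defE}--\eqref{df-a0} collapse to $\cE^0(x,t)=(2\sqrt{\pi t})^{-1}e^{-x^2/(4t)}$, the Gaussian heat kernel; hence $w_1^0$ and $w_2^0$ are precisely the reflection solutions of the Dirichlet and Neumann heat problems, consistent with the odd and even extensions $g^{\pm}$ used in \eqref{r-rep}. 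Since $w_i^\alpha(\cdot,t)=\cE^\alpha_t*g^{\pm}$ with $g^{\pm}$ fixed, Young's inequality reduces everything to the single estimate
$$
\|w_i^\alpha(\cdot,t)-w_i^0(\cdot,t)\|_{L^1(\bR_+)}\le 2\,\|\cE^\alpha_t-\cE^0_t\|_{L^1(\bR)}\,\|g\|_{L^1(\bR_+)},
$$
so it suffices to prove $\cE^\alpha_t\to\cE^0_t$ in $L^1(\bR)$, the cases $i=1,2$ being identical.

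Next I would establish pointwise convergence of the kernels. Writing $\Phi^\alpha(\xi)=E_{\alpha,1+1/\alpha,1/\alpha}(-\xi^{1+\alpha}/(1+\alpha))$, the factors $b_n^\alpha$ in \eqref{defb} are products of ratios of $\Gamma$ evaluated at $(1+\alpha)i+2$ and $(1+\alpha)i+\alpha+2$, which stay away from the poles and depend analytically on $\alpha$; thus $b_n^\alpha\to b_n^1$, and, controlling the tail of the defining series uniformly in $\alpha$ near $1$ by means of the entire-function bound recalled after \eqref{df-ML}, I obtain $\Phi^\alpha\to\Phi^1=e^{-\xi^2/4}$ locally uniformly. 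To promote this to $\cE^\alpha_t=a_0^\alpha\,t^{-1/(1+\alpha)}\Phi^\alpha(\cdot\,t^{-1/(1+\alpha)})$ I still need the normalizing constants to converge, i.e. $\int_0^\infty\Phi^\alpha\to\int_0^\infty\Phi^1=\sqrt{\pi}$, equivalently $a_0^\alpha\to a_0^1$.

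The step I expect to be the main obstacle is precisely this uniform control of the tail of $\Phi^\alpha$ as $\alpha\to1^-$: the bound of Lemma \ref{le-int} degenerates at $\alpha=1$, and the decay $\xi\Phi^\alpha(\xi)\le2$ of Lemma \ref{le-zanik} is not integrable at infinity. I would repair this by bootstrapping from the governing ODE. Rewriting \eqref{ode} via \eqref{RL-C} as $D^\alpha_{RL}\Phi^\alpha=-\tfrac{1}{1+\alpha}\xi\Phi^\alpha+\tfrac{\xi^{-\alpha}}{\Gamma(1-\alpha)}$ and integrating from $0$, while using $I^{1-\alpha}\Phi^\alpha\ge0$ (Lemma \ref{le-posit}), gives the first-moment estimate
$$
\int_0^\xi s\,\Phi^\alpha(s)\,ds\le \frac{1+\alpha}{\Gamma(2-\alpha)}\,\xi^{1-\alpha}.
$$
Because $\Phi^\alpha$ is decreasing (Lemma \ref{le-dec}), estimating the left side from below on $[\xi/2,\xi]$ yields $\Phi^\alpha(\xi)\le C\,\xi^{-(1+\alpha)}$ with $C$ uniform for $\alpha$ close to $1$, whence $\Phi^\alpha\le\min(1,C\xi^{-3/2})=:h\in L^1(0,\infty)$ for all such $\alpha$. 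Dominated convergence then gives $\int_0^\infty\Phi^\alpha\to\int_0^\infty\Phi^1$, so $a_0^\alpha\to a_0^1$ and $\cE^\alpha_t\to\cE^0_t$ pointwise. Finally, since $\cE^\alpha_t\ge0$ and $\int_\bR\cE^\alpha_t\,dx=1$ for every $\alpha\in(0,1]$ by Theorem \ref{thm-main}(2),(4), Scheff\'e's lemma upgrades the pointwise convergence to $L^1(\bR)$ convergence, and the displayed Young estimate closes the argument for both $i=1,2$ and every $t>0$.
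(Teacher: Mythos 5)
Your argument is correct in substance, but it follows a genuinely different route from the paper, and in fact proves more than the paper's own proof does. The paper disposes of the proposition in two sentences: the family $E_{\alpha,1+1/\alpha,1/\alpha}$ is asserted to be uniformly bounded, Montel's theorem makes it a normal family, and coefficientwise convergence identifies the limit as $E_{1,2,1}$; this yields only locally uniform convergence of $\Phi^\alpha$ and says nothing about convergence of the normalizing constants $a_0^\alpha$, about $L^1(\bR)$ convergence of the kernels $\cE^\alpha_t$, or about the passage to the convolutions $w^\alpha_i$. Those are exactly the points you supply: the identification $E_{1,2,1}(z)=e^{z/2}$ (so the limit kernel is the Gaussian and $w^0_i$ are the reflection solutions), the reduction via Young's inequality, and above all the first-moment estimate $\int_0^\xi s\,\Phi^\alpha(s)\,ds\le (1+\alpha)\xi^{1-\alpha}/\Gamma(2-\alpha)$ obtained from the governing ODE, which together with monotonicity gives the decay $\Phi^\alpha(\xi)\le C\xi^{-(1+\alpha)}$ with $C$ uniform near $\alpha=1$. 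This uniform integrable majorant is the key new ingredient — it is precisely what Lemma \ref{le-int} (which degenerates at $\alpha=1$) and Lemma \ref{le-zanik} (whose bound is not integrable) fail to provide — and it legitimately feeds dominated convergence for $a_0^\alpha$ and Scheff\'e's lemma for the kernels. The one soft spot is your appeal to the entire-function bound quoted after (\ref{df-ML}) to control the tail of the series uniformly in $\alpha$: that bound is stated for fixed parameters, with a threshold $r_0(\epsilon)$ that is not claimed to be uniform in $\alpha$. Replace it by a direct coefficient estimate, e.g.\ from $\Gamma(x)/\Gamma(x+\alpha)\le 2x^{-\alpha}$ for $x\ge 1$ one gets $b^\alpha_n\le 2^n((n-1)!)^{-1/2}$ for all $\alpha\in[1/2,1]$, which gives convergence of the series locally uniformly in $(z,\alpha)$; with that small repair your proof is complete, and arguably it is the proof the proposition actually needs.
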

\begin{proof}
We use here the uniform convergence of the Mittag-Leffler functions $E_{\alpha, 1+1/\alpha, 1/\alpha}$ to $E_{1,2,1}$. This follows from the uniform boundedness of the family $E_{\alpha, 1+1/\alpha, 1/\alpha}$ and the Montel Theorem.
\end{proof}
\begin{remark}
This proof breaks down if we try to pass to the limit as $\alpha$ goes to 0. In this case $E_{\alpha, 1+1/\alpha, 1/\alpha}$ converges to $\frac 1{1-z}$ for $|z|<1.$
\end{remark}

The observation made in the  Proposition above has a bit surprising consequences. It suggests that for small $\alpha$ we should see phenomena typical for the hyperbolic problems.

We could relate this observation to the behavior  the discretization scheme. We presented  in \cite{NRV} the $\frac12$-shifted Gr\"unwald approximation. We
write out the scheme, for the Dirichlet 
data, in terms of the Gr\"unwald weights. The approximation of $u(i\Delta x, k\Delta t)$ takes  the following form,
\begin{equation}\label{RLscheme}
\begin{aligned}
  & u_0^{k+1}=u_0^k, \qquad u_n^{k+1}=u_n^k,
\\
&u_i^{k+1}=\sum_{j=0}^{i-1}{\beta(g_{i+1-j}-g_{i-j})  u_{j}^k} + \left(1+\beta (g_1-g_0) \right)u_i^k +\beta g_0 u_{i+1}^k,
\end{aligned}
\end{equation}
where 
and the Gr\"unwald weights are given by
\begin{equation}
\label{gweights}
\beta = \frac{\Delta t}{(\Delta x)^{1+\alpha}},\quad
g_0 =1 , \quad
g_i =\frac{i-1-\alpha}{i}g_{i-1},\quad i =1,2, \ldots .
\end{equation}
We see that when $\alpha\to0$,  then  (\ref{RLscheme})--(\ref{gweights}) converge to an explicit finite difference scheme for the transport equation. 
This  suggests a finite speed of propagation. This is  supported by our simulations, presented there which showed an initial pulse tending toward the left.

There are two  sides of the same coin. For $\alpha =1$, eq. (\ref{rn1}) becomes a heat equation, where the speed of propagation is infinite. It means that if the initial perturbation is non-negative and it has a compact support, then solutions to (\ref{rn1}) will be positive everywhere for $t>0$. Interestingly, when  $\alpha\to1$, then  (\ref{RLscheme})--(\ref{gweights}) converges to an explicit finite difference scheme for the heat equation. We also noticed the smearing out effect,  see Fig. 2 in \cite{NRV}.

Actually, we can show:
\begin{proposition}\label{pi}
For all $\alpha\in (0,1]$ the speed of signal propagation for solutions to (\ref{rn1}) with initial condition $u(x,0)= f(x)$ and the zero Neumann data is infinite, i.e. if $f\ge 0$, $f\neq 0$ and $\supp{f}\subset(0,R)$, where $R>0$, then for all $x,t>0$ we have
$$
w_2(x,t)>0.
$$
\end{proposition}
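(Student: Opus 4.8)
The plan is to read off the conclusion directly from the convolution representation (\ref{rn3.6}) together with the strict positivity of $\cE$ established in Theorem \ref{thm-main}(2). Taking $g=f$ in (\ref{rn3.6}), the Neumann solution is
$$
w_2(x,t) = \int_0^\infty \bigl(\cE(x-y,t) + \cE(x+y,t)\bigr) f(y)\, dy,
$$
so it suffices to show that the kernel $K(x,y,t) := \cE(x-y,t) + \cE(x+y,t)$ is strictly positive for all $x,y>0$ and $t>0$, and then to invoke $f\ge 0$, $f\not\equiv 0$.

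First I would verify positivity of the kernel. Recall that $\cE$ has been extended evenly in the space variable, $\cE(x,t) = \cE(|x|,t)$. For the second summand this is immediate: since $x+y>0$, Theorem \ref{thm-main}(2) gives $\cE(x+y,t)>0$. For the first summand, when $x\neq y$ we have $|x-y|>0$ and again $\cE(x-y,t) = \cE(|x-y|,t)>0$; the only point needing separate attention is the diagonal $x=y$, where $\cE(0,t) = a_0 t^{-1/(1+\alpha)}\Phi(0) = a_0 t^{-1/(1+\alpha)}>0$, using $\Phi(0)=1$ and $a_0>0$. Hence $K(x,y,t)>0$ throughout $\bR_+\times\bR_+\times\bR_+$.

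With the kernel strictly positive, the integrand $K(x,y,t)f(y)$ is nonnegative, being the product of a positive function and $f\ge 0$. Since $f\not\equiv 0$ and $f\ge 0$, the set $\{y\in(0,R): f(y)>0\}$ has positive Lebesgue measure, and on this set the integrand is strictly positive. Therefore $w_2(x,t) = \int_0^\infty K(x,y,t)f(y)\,dy>0$ for every $x,t>0$, which is the assertion.

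I do not expect a genuine obstacle in this argument: all the analytic work has already been carried out in establishing the positivity of $\cE$ (Theorem \ref{thm-main}(2)). The only subtlety worth flagging is that positivity of the kernel must be checked at the diagonal $x=y$ as well, which reduces to $\Phi(0)=1>0$. The conceptual content is entirely in the interpretation: although $\supp f\subset(0,R)$, the solution is strictly positive at every point $x>0$ for arbitrarily small $t>0$, so the support of $w_2(\cdot,t)$ instantly fills $[0,\infty)$, which is precisely the infinite speed of propagation.
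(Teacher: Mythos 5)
Your argument is correct and is exactly the paper's approach: the paper's proof simply states that the conclusion is an immediate consequence of the definition of $w_2$ together with the positivity of $\cE$ and $f$, which is precisely what you spell out. Your additional care at the diagonal $x=y$ (where positivity reduces to $\Phi(0)=1>0$ via the even extension) is a reasonable bit of bookkeeping that the paper leaves implicit.
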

\begin{proof}
This is an immediate consequence of the definition of $w_2$ and positivity of $\cE$ and $f$.
\end{proof}
We have chosen the  Neumann data, because of the simplicity of the formula for a solution. 

\section*{Appendix}
Here we present a different and more elegant approach to the derivation of $\cE$ and its positivity. It depends on the properties of the three-parameter generalized Mittag-Leffler function and the arguments are shorter. 

We first derive the form of $\cE$ assuming that $u$ defined as
$$
u(x,t) = t^{-\frac 1{1+\alpha}} v(x  t^{-\frac 1{1+\alpha}})
$$
is a solution to (\ref{rn1}). Then, Corollary (\ref{co-ald}) yields,
$$
(D^\alpha_x u)_x (x,t) = t^{-1 - \frac 1{1+\alpha}} (D^\alpha_y v)_y(y),
$$
where $y = x t^{-\frac1{1+\alpha}}.$ Moreover, it is easy to check directly that
$$
u_t(x,t) = - \frac1{1+\alpha} t^{-1 - \frac1{1+\alpha}}( v(y) + y v'(y)).
$$
Combining these observations with (\ref{rn1}) yields,
$$
- \frac1{1+\alpha} t^{-1 - \frac1{1+\alpha}}( v(y) + y v'(y)) = (D^\alpha_y v)_y(y).
$$
This can be rewritten as
$$
\frac{d}{dy}\left(D_y^\alpha v(y)+\frac{1}{1+\alpha}yv(y)\right)=0.
$$
Since we are interested in smooth solutions then, due to \cite[Proposition 3.1]{NRV} we see that
$$
 D_y^\alpha v(y)+\frac{1}{1+\alpha}yv(y)=0, \qquad y>0. \eqno(*)
$$
According to Proposition \ref{rnieE} 
a  solution to  this equation is given by the formula
\begin{equation*}
v(y)=v(0)E_{\alpha,1+1/\alpha,1/\alpha}\left(-\frac{y^{1+\alpha}}{1+\alpha}\right).
\end{equation*}
Here $E_{\alpha,1+1/\alpha,1/\alpha}$ is a generalized Mittag-Leffler function defined in (\ref{df-ML}).

We can also offer much shorter and easier proof of the positivity of $\cE$. It is based again on the theory of the  Mittag-Leffler function and ODEs.

\bigskip\noindent {\bf Lemma A.}
Function $(0, \infty) \ni x \mapsto v(x) \equiv E_{\alpha, 1+\frac{1}{\alpha}, \frac{1}{\alpha}}(-\frac{x^{1+\alpha}}{1+\alpha})$ is positive for all $x > 0$.
\bigskip

\begin{proof}
We consider eq. (*) 
for $v$ with the initial condition $v(0) = 1$
Then, $v(x) \equiv E_{\alpha, 1+\frac{1}{\alpha}, \frac{1}{\alpha}}(-\frac{x^{1+\alpha}}{1+\alpha})$ is a solution to (*). 
Let us suppose our claim is not valid and the set
\begin{equation*}
A := \{ x>0 : v(x) \leq 0 \}
\end{equation*}
is not empty. Due to the continuity of $v$, there is $x_0$ such that
\begin{equation*}
v(x_0) = 0\qquad\hbox{and}\qquad
v(x) >0 \quad \mbox{for} \,\, x \in (0, x_0).
\end{equation*}
However, we have
\begin{align*}
\displaystyle v(x_0) &= -\frac{1+\alpha}{x_0}D_{x}^{\alpha}v(x_0) \\
 &= -\frac{1+\alpha}{x_0 \Gamma(1-\alpha)} \left( \frac{v(x_0) - v(0)}{x_0^\alpha} + \alpha \int_0^{x_0} \frac{v(x_0) - v(x_0 - z)}{z^{\alpha + 1}} dz \right) \\
 &= \frac{1+\alpha}{x_0 \Gamma(1-\alpha)} \left( \frac{1}{x_0^\alpha} + \alpha \int_0^{x_0} \frac{v(x_0 - z)}{z^{\alpha + 1}} dz \right) > 0.
\end{align*}
This is a contradiction.
\end{proof}

\subsection*{Acknowledgments}
PR was in part supported by  the National Science Centre, Poland, through the grant number 2017/26/M/ST1/00700.
This research was initiated during a PR visit to the University of Tokyo in 2019, whose hospitality is greatly appreciated.


\begin{thebibliography}{99}
%
\bibitem{valdinoci}
S.Dipierro, B.Pellacci, E.Valdinoci, G.Verzini,
Time-fractional equations with reaction terms: fundamental solutions and asymptotics,  	{\it arXiv:1903.11939.}
%
\bibitem{goro}
R.Gorenflo, A.A.Kilbas, S.V. Rogosin,  On the generalized Mittag-Leffler type functions. {\it Integral Transform. Spec. Funct.} {\bf 7} (1998), no. 3-4, 215-224.
%
\bibitem{saigo}
A.A. Kilbas, M.Saigo, On mittag-leffler type function, fractional calculus operators
and solutions of integral equations, {\it Integral Transform. Spec. Funct.}, {\bf  4}  (1996) no 4, 355--370.
%
\bibitem{kilbas} A.A.Kilbas, H.M.Srivastava, J.J.Trujillo,  Theory and applications of fractional differential equations. North-Holland Mathematics Studies, 204. Elsevier Science B.V., Amsterdam, 2006.
%
\bibitem{kim}
K.-H.Kim, D.Park, J.Ryu, An $L^q(L^p)$-theory for diffusion equations with space-time nonlocal operators, {\it  J. Differential Equations}, {\bf 287} (2021), 376-427.
%
\bibitem{KuYa}
A.Kubica, M. Yamamoto, Initial-Boundary Value Problems For Fractional Diffusion Equations With Time-Dependent Coefficients, {\it Fract. Calc. Appl. Anal.}, {\bf  21} (2018),  276--311.
%
\bibitem{luczko}
Y.Luchko, F.Mainardi, Y.Povstenko, Propagation speed of the maximum of the fundamental solution to the fractional diffusion-wave equation, {\it Comput. Math. Appl.}, {\bf 66} (2013), no. 5, 774--784.
%
\bibitem{mainardi}
F.Mainardi, Y.Luchko, G.Pagnini, 
The fundamental solution of the space-time fractional diffusion equation, {\it
Fract. Calc. Appl. Anal.}, {\bf 4} (2001), no. 2, 153--192.
%
\bibitem{NaRy}T.Namba, P.Rybka, On viscosity solutions of space-fractional diffusion equations of Caputo type, {\it SIAM J. Math. Anal} {\bf 52}, (2020), no 1, 653--681.
%
\bibitem{NRV}
T.Namba, P.Rybka, V.Voller, Some comments on using fractional derivative operators in modeling non-local diffusion processes,  {\it J. Comput. Appl. Math.} {\bf  381} (2021), 113040.
%
\bibitem{Klekot}
Y.Povstenko, J.Klekot, 
Fractional heat conduction with heat absorption in a sphere under Dirichlet boundary condition,  {\it Comput. Appl. Math.} {\bf  37} (2018), no. 4, 4475--4483. 
%
\bibitem{pschu}
A.V.Pskhu, The fundamental solution of a diffusion-wave
equation of fractional order, {\it Izv. Math.} {\bf  73} (2009) 141--182.
%
\bibitem{roscani} S. D. Roscani, D. A. Tarzia and L. Venturato, The similarity method and explicit solutions for the fractional space  one-phase Stefan problems, {\it arXiv:2009.12977}
%
\bibitem{KR1} K.Ryszewska,  An analytic semigroup generated by a fractional differential operator, {\it J. Math. Anal. Appl.} {\bf  483} (2020), no. 2, 123654, 17 pp.
%
\bibitem{KR2} K.Ryszewska,  A space-fractional Stefan problem, {\it Nonlinear Anal.} {\bf  199} (2020), 112027, 30 pp.
%
%
\bibitem{VV}
V.R. Voller, On a fractional derivative form of the Green-Ampt infiltration model, {\it Adv. Water Res.,} {\bf  34} (2010),  257--262.
\end{thebibliography}
\end{document}